\theoremstyle{plain}
\newtheorem*{thm*}{Theorem}
\newtheorem{thm}{Theorem}[section]
\Crefname{thm}{Theorem}{Theorems}
\newtheorem*{lem*}{Lemma}
\newtheorem{lem}[thm]{Lemma}
\Crefname{lem}{Lemma}{Lemmas}
\newtheorem*{claim*}{Claim}
\newtheorem{claim}[thm]{Claim}
\crefname{claim}{Claim}{Claims}
\Crefname{claim}{Claim}{Claims}
\Crefname{prop}{Proposition}{Propositions}
\newtheorem{cor}[thm]{Corollary}
\Crefname{cor}{Corollary}{Corollaries}
\newtheorem{conj}[thm]{Conjecture}
\Crefname{conj}{Conjecture}{Conjectures}
\newtheorem{qn}[thm]{Question}
\Crefname{qn}{Question}{Questions}
\Crefname{obs}{Observation}{Observations}
\theoremstyle{definition}
\Crefname{prob}{Problem}{Problems}
\Crefname{defn}{Definition}{Definitions}
\newtheorem{example}[thm]{Example}
\Crefname{example}{Example}{Examples}
\theoremstyle{remark}
\renewenvironment{proof}[1][]{\begin{trivlist}
\item[\hspace{\labelsep}{\bf\noindent Proof#1.\/}] }{\qed\end{trivlist}}
\newcommand{\mF}{\mathcal{F}}
\newcommand{\mA}{\mathcal{A}}
\newcommand{\mB}{\mathcal{B}}
\newcommand{\mP}{\mathcal{P}}
\newcommand{\mT}{\mathcal{T}}
\newcommand{\mX}{\mathcal{X}}
\newcommand{\mY}{\mathcal{Y}}
\newcommand{\p}{\partial}
\newcommand{\remove}[1]{}
\begin{document}

\title{Disjoint pairs in set systems with restricted intersection}
\date{\vspace{-5ex}}

\author{
    Ant{\'o}nio Gir{\~a}o \thanks{
        School of Mathematics,
        University of Birmingham,
        Edgbaston, Birmingham, B15 2TT,
        United Kingdom;
        e-mail:
        \mbox{\texttt{giraoa@bham.ac.uk}}\,.
    } 
    \and
    Richard Snyder \thanks{
    	Karlsruhe Institute of Technology,
    	Karlsruhe, Germany;
    	e-mail: \mbox{\texttt{richard.snyder@kit.edu}}\,.
     }
}

\maketitle

\singlespace
\begin{abstract}

\setlength{\parskip}{\medskipamount}
    \setlength{\parindent}{0pt}
    \noindent
    
The problem of bounding the size of a set system under various intersection restrictions has a central place in extremal combinatorics. 
We investigate the maximum number of \emph{disjoint pairs} a set system can have in this setting. In particular, we show 
that for any pair of set systems $(\mA, \mB)$ which avoid a cross-intersection of size $t$, the number of disjoint pairs $(A, B)$ 
with $A \in \mA$ and $B \in \mB$ is at most $\sum_{k=0}^{t-1}\binom{n}{k}2^{n-k}$. This implies an asymptotically best possible upper 
bound on the number of disjoint pairs in a single $t$-avoiding family $\mF \subset \mP[n]$. We also study this problem 
when $\mA$, $\mB \subset [n]^{(r)}$ are both $r$-uniform, and show that it is closely related to the problem of determining the maximum of the product $|\mA||\mB|$ 
when $\mA$ and $\mB$ avoid a cross-intersection of size $t$, and $n \ge n_0(r, t)$.
        \end{abstract}

\onehalfspace
\section{Introduction}\label{sec:intro}
Let $[n]$ denote the set $\{1, \ldots , n\}$ and for two set systems $\mA$, $\mB \subset \mP[n]$, let
$d(\mA, \mB)$ denote the number of \emph{disjoint pairs}; that is, the number of pairs $(A, B) \in \mA \times \mB$ 
with $A \cap B = \varnothing$. Similarly, for a set system $\mF \subset \mP[n]$ we let $d(\mF)$ denote the number of disjoint 
pairs in $\mF$. Accordingly, $d(\mF) = \frac{1}{2}d(\mF, \mF)$ (unless, of course, $\varnothing \in \mF$, in which 
case $d(\mF) = \frac{1}{2}(d(\mF, \mF) - 1)$). We are interested in the maximum number of disjoint pairs a set system $\mF$ can have 
under certain restrictions on the possible intersection sizes of elements of $\mF$.
For a set $L$ of nonnegative integers, a set system $\mF$ is said to be $L$-\emph{intersecting} if $|F_1 \cap F_2| \in L$ for all distinct $F_1, F_2 \in \mF$. Similarly, a pair of set systems $(\mA, \mB)$ is $L$-\emph{cross-intersecting} if $|A\cap B| \in L$ whenever $A \in \mA, B \in \mB$. When $L = \{t, \ldots, n\}$ we say $\mF$ is $t$-\emph{intersecting}, and when $t = 1$ we shall simply say $\mF$ is \emph{intersecting}. Finally, if $L = [n]\setminus \{t\}$, we shall say that $\mF$ (resp., $(\mA, \mB)$) is $t$-\emph{avoiding} (resp., $t$-\emph{cross-avoiding}).

\subsection{Background}

The problem of bounding the size of a set system under certain intersection restrictions has a central place in Extremal Set Theory. We shall not give a full account of such problems, but only touch upon some results that are particularly relevant for our purposes (for a broader account we refer the interested reader to the recent survey of Frankl and Tokushige~\cite{FranklTokushige}). The Erd\H{o}s-Ko-Rado Theorem~\cite{EKR} is perhaps the most foundational result in this area, determining the maximum size of 
an intersecting $r$-uniform set system. More precisely, the theorem says that if $n \ge 2r$ and $\mF \subset [n]^{(r)}$ is an intersecting set system, 
then $|\mF| \le \binom{n-1}{r-1}$, and moreover, if $n > 2r$, then equality holds only when $\mF$ consists of all $r$-sets containing a fixed element of the ground set. Numerous extensions and variations have been addressed over the years. Perhaps most notably, The Complete Intersection Theorem of Ahlswede and Khachatrian~\cite{AK} determines the maximum size of a $t$-intersecting set system $\mF \subset [n]^{(r)}$ for all values of $n$. In the non-uniform case, Katona~\cite{Katona} showed that any $(t+1)$-intersecting set system $\mF$ satisfies 
\[
	|\mF| \leq |\mF(n, t)|,
\]
where $\mF(n, t)$ is $\{A: |A| \ge \frac{n + t +1}{2}\}$ if $n + t$ is odd, or $\{A:|A\cap([n]\setminus \{1\}))| \ge \frac{n+t}{2}\}$ if $n+t$ is even.
Trivially, if a set system is $(t+1)$-intersecting then it is also $t$-avoiding. Erd\H{o}s~\cite{erdos-proc} asked what happens 
when we weaken the condition that all $F_1, F_2 \in \mF$ satisfy $|F_1\cap F_2| > t$ to $|F_1 \cap F_2| \neq t$. Frankl and F{\"u}redi~\cite{FranklFuredinonuniform} answered this question, showing that 
when $n \ge n_0(t)$ we recover the same asymptotic solution as in Katona's theorem. In particular, letting 
\[
	\mF^*(n, t) = \mF(n, t) \cup [n]^{(\le t-1)},
\]
they showed that as long as $n \ge n_0(t)$ and $\mF \subset \mP([n])$ is $t$-avoiding, then $|\mF| \le |\mF^*(n, t)|$.

In this paper, instead of focusing on the \emph{size} of set systems with imposed intersection conditions, we are interested in the maximum number of disjoint pairs they can have. Alon and Frankl~\cite{FranklAlon} addressed the problem of determining the maximum number of disjoint pairs in a set system of fixed size. Obviously, we always have $d(\mF) < |\mF|^2$, but for large families they showed that this bound is far off: if $\mF$ has size $2^{n/2 + o(n)}$, then $d(\mF) = |\mF|^{2-o(1)}$. Problems concerning the minimum number of disjoint pairs in set systems have been studied by Ahlswede~\cite{Ahlswedemindisjoint}, Frankl~\cite{Frankldisjointpairsnonuniform}, Bollob{\'a}s and Leader~\cite{BollobasLeader}, and Das, Gan, and Sudakov~\cite{SudakovGanDas}.

Now, if $\mF \subset \mP[n]$ is $t$-intersecting for $t \ge 1$, then trivially $d(\mF) = 0$. However, what happens to the maximum number of disjoint pairs if we just impose that $\mF$ forbids a single (positive) intersection size? Our main line of enquiry 
investigates what happens to the parameter $d(\mF)$ under this weaker condition.


\subsection{Our Results}

Our first result provides an upper bound for the maximum number of disjoint pairs in $t$-avoiding set systems, for any 
$t \ge 1$.

\begin{restatable}{thm}{thmtavoidingsingle}\label{thm:t-avoiding-single}
Let $n, t$ be positive integers with $t \le n$ and suppose that $\mF \subset \mP[n]$ is $t$-avoiding. Then 
\[
	d(\mF) \leq \frac{1}{2}\left(\sum_{k=0}^{t-1} \binom{n}{k}2^{n-k} - 1\right).
\]
\end{restatable}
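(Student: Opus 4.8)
The plan is to derive this single-family bound from the cross-intersecting (pair) version established in the paper, namely that any $t$-cross-avoiding pair $(\mA,\mB)$ satisfies $d(\mA,\mB)\le \sum_{k=0}^{t-1}\binom{n}{k}2^{n-k}$, and then to pass to the symmetric case $\mA=\mB=\mF$. The bridge is the identity $d(\mF)=\tfrac12\bigl(d(\mF,\mF)-\mathbf{1}[\varnothing\in\mF]\bigr)$ recorded in the introduction, so the task reduces to showing $d(\mF,\mF)\le\sum_{k=0}^{t-1}\binom{n}{k}2^{n-k}$, after which the stray $-1$ should fall out automatically.

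To pin down the $-1$ I would first argue that we may assume $\varnothing\in\mF$. Indeed, adjoining the empty set to $\mF$ keeps the family $t$-avoiding (the empty set meets every set in $0\ne t$ elements) and can only increase $d(\mF)$, so it suffices to prove the bound for families containing $\varnothing$; for such families the bridge identity reads $d(\mF)=\tfrac12(d(\mF,\mF)-1)$, which converts $d(\mF,\mF)\le\sum_{k=0}^{t-1}\binom{n}{k}2^{n-k}$ into exactly the claimed inequality. Next, observe that the pair $(\mF,\mF)$ is $t$-cross-avoiding \emph{precisely} when $\mF$ contains no set of size exactly $t$: for distinct $F_1,F_2$ the condition is the hypothesis that $\mF$ is $t$-avoiding, while the only way a diagonal pair $(F,F)$ can have $|F\cap F|=t$ is $|F|=t$. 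Hence, whenever $\mF$ has no $t$-set, the pair theorem applies verbatim to $(\mF,\mF)$ and we are done.

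The main obstacle is therefore the sets of size exactly $t$; write $\mT=\{F\in\mF:|F|=t\}$. The diagonal pairs $(S,S)$ with $S\in\mT$ violate the cross-avoiding hypothesis, yet they contribute nothing to $d(\mF,\mF)$ since a nonempty set is never disjoint from itself. The plan is to split off $\mT$: the pair $(\mF,\mF\setminus\mT)$ \emph{is} $t$-cross-avoiding (any $A\in\mF$ and $B\in\mF\setminus\mT$ are either distinct, whence $|A\cap B|\ne t$ by hypothesis, or equal with $|A|\ne t$), so the pair theorem bounds all disjoint pairs having at least one coordinate outside $\mT$. What remains is to control the disjoint pairs lying entirely inside $\mT$, that is $d(\mT,\mT)$, and to fold this back into the global count without overshooting $\sum_{k=0}^{t-1}\binom{n}{k}2^{n-k}$. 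I expect this to be the delicate step: naively adding $d(\mF,\mF\setminus\mT)$ and $d(\mT,\mT)$ is lossy, so the argument must exploit slack created in the pair bound by the presence of $\mT$ — equivalently, it must use that pairwise-disjoint $t$-sets are spread thinly across $[n]$ and that each $t$-set's disjoint partners already sit among the configurations counted on the right-hand side. An alternative and cleaner route I would also try is to verify that the proof of the pair theorem only ever invokes the cross-avoiding hypothesis on pairs with nonempty intersection; if so it applies directly to $(\mF,\mF)$ to bound $d(\mF,\mF)$, the offending diagonal pairs being irrelevant to disjointness, and the split can be sidestepped entirely.
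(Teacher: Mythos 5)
Your reduction is exactly the one the paper intends: the authors assert that \Cref{thm:t-avoiding-single} ``immediately follows'' from \Cref{thm:t-avoiding-double} and give no further details, so your fleshed-out version --- pass to $d(\mF,\mF)$ via $d(\mF)=\tfrac12\bigl(d(\mF,\mF)-\mathbf{1}[\varnothing\in\mF]\bigr)$, and normalise by adjoining $\varnothing$ (which indeed preserves $t$-avoidance and only increases $d$) to account for the $-1$ --- is the intended argument. You have also correctly isolated the only non-trivial point, which the paper passes over in silence: $(\mF,\mF)$ is $t$-cross-avoiding if and only if $\mF$ is $t$-avoiding \emph{and} contains no set of size exactly $t$, because the cross condition is imposed on the diagonal pairs $(S,S)$ as well. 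When $\mF\cap[n]^{(t)}=\varnothing$ your deduction is complete and correct.

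The step you flag as ``delicate'', however, is not merely delicate: it cannot be carried out in general, because the statement as printed is false once $\mF$ contains $t$-sets and $n$ is small. Take $t=1$, $n=3$ and $\mF=\{\varnothing,\{1\},\{2\},\{3\}\}$. Any two distinct members intersect in $0\neq 1$ elements, so $\mF$ is $1$-avoiding, yet $d(\mF,\mF)=13$ and $d(\mF)=6$, whereas the claimed bound is $\tfrac12(2^3-1)=\tfrac72$. (The analogous family violates the bound for $n=1,2,4$ as well, and $\mF=\mP[n]$ with $n=t$ gives $d(\mF)=\tfrac12(3^t-1)$ against a right-hand side of $\tfrac12(3^t-2)$, which is not even an integer.) This also kills your alternative route (b): since $d(\mF,\mF)=13>8=f(3,1)$ for the family above, no reading of the proof of \Cref{thm:t-avoiding-double} that tolerates the diagonal violations can output the bound $f(n,t)$; concretely, the base case $t=0$ of that induction fails off the diagonal because $(\varnothing,\varnothing)$ is a disjoint pair. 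Route (a) likewise cannot close, because $d(\mF,\mT)$ with $\mT=\mF\cap[n]^{(t)}$ can be as large as order $\binom{n}{t}2^{n-t}$, comparable to $f(n,t)$ itself, so it cannot simply be added to the pair bound. The upshot is that the gap you identified is a genuine error in the paper's one-line deduction rather than a defect of your write-up: the theorem as stated needs either the extra hypothesis that $\mF$ contains no $t$-set (under which your proof is complete) or a restriction to $n\geq n_0(t)$ together with a separate argument controlling the disjoint pairs meeting $\mT$, which neither you nor the paper supplies.
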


Note that the number of disjoint pairs in $\mF^*(n, t)$ is at least (assuming for simplicity that $n+ t$ is odd)
\begin{align*}
\sum_{k=0}^{t-1} \binom{n}{k}\cdot \sum_{j=(n+t+1)/2}^{n-k}\binom{n-k}{j}&=\sum_{k=0}^{t-1} \binom{n}{k} (1-o(1))2^{n-k-1}\\
&=(1-o(1))\frac{1}{2}\sum_{k=0}^{t-1} \binom{n}{k}2^{n-k},
\end{align*}
as $n \rightarrow \infty$.
Therefore, for large $n$ the upper bound we obtain in \Cref{thm:t-avoiding-single} is essentially best possible. We conjecture that $\mF^{*}(n,t)$ in fact maximizes the number of disjoint pairs for $t$-avoiding set systems (see \Cref{sec:final}).
We shall actually prove a `two-family' version which bounds the number of disjoint pairs in 
a pair $(\mA, \mB)$ of $t$-cross-avoiding set systems. In particular, 
\Cref{thm:t-avoiding-single} immediately follows from the following result.

\begin{restatable}{thm}{thmtavoidingdouble}\label{thm:t-avoiding-double}
Let $n, t$ be positive integers with $t \le n$ and suppose that $(\mA, \mB) \subset \mP[n] \times \mP[n]$ is a pair of $t$-cross-avoiding set systems. Then
\[
	d(\mA, \mB) \leq \sum_{k=0}^{t-1} \binom{n}{k}2^{n-k}.
\]

\end{restatable}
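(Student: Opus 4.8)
Since \Cref{thm:t-avoiding-single} follows from \Cref{thm:t-avoiding-double}, the whole task is the two-family bound, and the two-family formulation is in fact the natural one for an induction on $n$: peeling off the last coordinate will break a single cross-avoiding pair into several pairs of smaller families. Write $f(n,t) := \sum_{k=0}^{t-1}\binom{n}{k}2^{n-k}$ for the target right-hand side. The first thing I would record is the Pascal-type recursion it satisfies,
\[
	f(n,t) = 2\,f(n-1,t) + f(n-1,t-1),
\]
together with the trivial base case $t = 0$ (where a $0$-cross-avoiding pair has no disjoint pairs at all, so $d(\mA,\mB)=0=f(n,0)$). The goal is then to reproduce this recursion at the level of the disjoint-pair count.

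\textbf{The element split.} Fix the coordinate $n$ and define, as subfamilies of $\mP[n-1]$,
\[
	\mA_0 = \{A \in \mA : n \notin A\}, \quad \mA_1 = \{A \setminus \{n\} : A \in \mA,\ n \in A\},
\]
and analogously $\mB_0, \mB_1$. Splitting a disjoint pair $(A,B)$ according to whether $n$ lies in $A$, in $B$, or in neither (it cannot lie in both, since $A \cap B = \varnothing$) gives the identity
\[
	d(\mA,\mB) = d(\mA_0,\mB_0) + d(\mA_1,\mB_0) + d(\mA_0,\mB_1).
\]
A short case analysis on the location of $n$ shows that each of the three pairs appearing here is $t$-cross-avoiding, while the pair $(\mA_1,\mB_1)$ that is \emph{absent} from the count is $(t-1)$-cross-avoiding (an element $n$ present in both sets lowers the required intersection by one). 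This already matches the shape of the recursion: I want two clean contributions of size $f(n-1,t)$ and one of size $f(n-1,t-1)$.

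\textbf{Peeling off the two easy terms.} To avoid the overcounting caused by $\mA_0$ and $\mA_1$ overlapping in $\mP[n-1]$, I would use the inclusion–exclusion identity (valid because disjointness from a fixed $B$ is a property of the individual set)
\[
	d(\mA_0,\mB_0) + d(\mA_1,\mB_0) = d(\mA_0 \cup \mA_1,\mB_0) + d(\mA_0 \cap \mA_1,\mB_0).
\]
Since $(\mA_0 \cup \mA_1,\mB_0)$ is again $t$-cross-avoiding, the inductive hypothesis gives $d(\mA_0 \cup \mA_1,\mB_0) \le f(n-1,t)$, and likewise $d(\mA_0,\mB_1) \le f(n-1,t)$ directly. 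These account for the $2\,f(n-1,t)$ in the recursion, and what remains to be shown is exactly
\[
	d(\mA_0 \cap \mA_1,\mB_0) \le f(n-1,t-1).
\]
(A sanity check against the extremal family $\mA = [n]^{(\le t-1)}$, $\mB = \mP[n]$ confirms this is tight: there $\mA_0 \cap \mA_1 = [n-1]^{(\le t-2)}$ and $\mB_0 = \mP[n-1]$, giving precisely $f(n-1,t-1)$.)

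\textbf{The main obstacle.} The difficulty is concentrated in this last inequality. The family $\mA_0 \cap \mA_1$ consists of those $S \subseteq [n-1]$ with both $S,\,S \cup \{n\} \in \mA$, and pairing it with $\mB_0$ only yields the $t$-cross-avoiding condition, not the $(t-1)$-condition one needs for the bound $f(n-1,t-1)$. The $(t-1)$-structure instead lives on the pair $(\mA_0 \cap \mA_1,\mB_1)$ — equivalently on the cell "$n$ in both sets", which disjointness has excised from the count. So the crux is to transfer the $(t-1)$-cross-avoiding savings, available only on the discarded cell, onto $d(\mA_0 \cap \mA_1,\mB_0)$. I expect this to require strengthening the inductive hypothesis so that the bound at level $t$ is proved \emph{simultaneously} with a companion statement that controls such "doubly-shifted" subfamilies at level $t-1$, or alternatively an exchange argument establishing that one may assume $\mA_0 \cap \mA_1$ is no larger than a family of sets of size $\le t-2$ (as in the extremal configuration). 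Identifying the correct strengthened statement that closes the recursion is, in my view, the heart of the proof.
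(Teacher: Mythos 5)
Your setup — induction on $n$ and $t$ driven by the recursion $f(n,t)=2f(n-1,t)+f(n-1,t-1)$, splitting along the element $n$, and observing that the ``$n$ in both sets'' cell carries the $(t-1)$-cross-avoiding structure — is exactly the right frame, and it matches the paper's strategy. But the specific residual claim you isolate, $d(\mA_0\cap\mA_1,\mB_0)\le f(n-1,t-1)$, is not merely hard: it is false. Take $t=1$, $n\in S\subsetneq[n]$, $\mA=\mP(S)$, $\mB=\mP([n]\setminus S)$. This pair is $1$-cross-avoiding and extremal. Here $\mA_0\cap\mA_1=\mP(S\setminus\{n\})$ and $\mB_0=\mB$, so $d(\mA_0\cap\mA_1,\mB_0)=2^{n-1}$, whereas $f(n-1,0)=0$. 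The theorem survives in this example only because $d(\mA_0,\mB_1)=0$ falls far short of its budget $f(n-1,t)$; your allocation of the three terms to the three slots $f(n-1,t)+f(n-1,t)+f(n-1,t-1)$ is too rigid, and no strengthening of the inductive hypothesis will rescue the inequality as you have stated it.

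The paper's resolution is a different accounting of the same ingredients. Writing $\mA_n^*$ for the sets $A\ni n$ with $A\setminus\{n\}\in\mA$ (so $D(\mA_n^*)=\mA_0\cap\mA_1$ in your notation), it forms \emph{two} $t$-cross-avoiding pairs on $[n-1]$, each containing a full copy of $(\mA_0,\mB_0)$; the resulting double count of $d(\mA_0,\mB_0)$ is then spent to absorb most of the pairs in $(\mA_n^*,\mB_0)$ and $(\mA_0,\mB_n^*)$, via identities such as $d(D(\mA_n^*),\mB_0\setminus D(\mB_n^*))=d(\mA_n^*,\mB_0\setminus D(\mB_n^*))$. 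After this bookkeeping the only uncovered disjoint pairs are those in $(D(\mA_n^*),\mB_n^*)$, which biject with disjoint pairs in $(D(\mA_n^*),D(\mB_n^*))$ — and \emph{that} pair genuinely is $(t-1)$-cross-avoiding, so it alone is charged to $f(n-1,t-1)$. (A second wrinkle you did not need but the paper does: since its two pairs mix deleted sets from $\mA_n$ and $\mB_n$, it must first separate out the subfamilies $\mA_n^{t+1},\mB_n^{t+1}$ of sets achieving a $(t+1)$-intersection across $\mB_n,\mA_n$, placing them in different pairs to preserve $t$-cross-avoidance.) So the missing idea is not a stronger induction hypothesis but a reallocation: the bulk of $d(\mA_0\cap\mA_1,\mB_0)$ must be paid for out of a second copy of $d(\mA_0,\mB_0)$, leaving only the portion matched against $\mB_n^*$ for the $(t-1)$-level budget.
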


We remark that this is a generalization of a result in~\cite{C}, where the case $t= 1$ was established. We are also able to classify the extremal examples for \Cref{thm:t-avoiding-double}. Namely, if $t = 1$, then equality occurs if and only if $\mA = \mP(S), \mB = \mP([n]\setminus S)$ (or vice-versa) for some subset $S \subseteq [n]$, and if $t\geq 2$, equality holds if and only if $\mA = [n]^{(\le t-1)}, \mB = \mP[n]$ (or vice-versa).

\Cref{thm:t-avoiding-double} has the following immediate corollary. 

\begin{cor}\label{cor:Lintersecting}
Let $L$ be a set of $s$ nonnegative integers and suppose that $(\mA, \mB)$ is a pair of $L$-cross-intersecting set systems. Then
\[
	d(\mA, \mB) \le \sum_{k=0}^{s-1} \binom{n}{k}2^{n-k},
\]
with equality if and only if $L = \{0, \ldots , s-1\}$.
\end{cor}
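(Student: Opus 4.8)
The plan is to read off this corollary from \Cref{thm:t-avoiding-double} by isolating the single cheapest intersection size that the pair is forced to avoid. Throughout I take $L \subseteq \{0, 1, \ldots, n\}$, which is the natural setting since a cross-intersection can never have size exceeding $n$; this also rules out spurious equality cases coming from values of $L$ that are never attained. A disjoint pair $(A,B)$ is exactly one with $|A \cap B| = 0$, so if $0 \notin L$ then $d(\mA, \mB) = 0$ and the (strict) inequality is immediate. In general I let $t$ be the least nonnegative integer not belonging to $L$. Since $(\mA, \mB)$ is $L$-cross-intersecting, every cross-intersection has size in $L$, and in particular none has size exactly $t$; that is, $(\mA, \mB)$ is $t$-cross-avoiding. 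The one elementary input I would record is the pigeonhole bound $t \le s$: the $s+1$ integers $0, 1, \ldots, s$ cannot all lie in the $s$-element set $L$, so the least omitted integer is at most $s$.

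With $t$ in hand, provided $1 \le t \le n$ I would apply \Cref{thm:t-avoiding-double} directly to obtain
\[
d(\mA, \mB) \le \sum_{k=0}^{t-1}\binom{n}{k}2^{n-k} \le \sum_{k=0}^{s-1}\binom{n}{k}2^{n-k},
\]
where the second inequality uses $t \le s$ together with the nonnegativity of each summand. The two boundary regimes are handled by hand: when $t = 0$ we have $d(\mA,\mB) = 0$, and when $t = n+1$ (forcing $L = \{0, \ldots, n\}$, so that the cross-intersection condition is vacuous) we have $d(\mA, \mB) \le 3^n = \sum_{k=0}^{n}\binom{n}{k}2^{n-k}$, which equals the claimed bound since then $s - 1 = n$.

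For the equality statement I would track where the above chain can be tight. Because every term $\binom{n}{k}2^{n-k}$ with $0 \le k \le n$ is strictly positive and the indices involved never exceed $s-1 \le n$, the inequality $\sum_{k=0}^{t-1} \le \sum_{k=0}^{s-1}$ is strict unless $t = s$; and $t = s$ means $\{0, 1, \ldots, s-1\} \subseteq L$, which together with $|L| = s$ forces $L = \{0, \ldots, s-1\}$. Conversely, when $L = \{0, \ldots, s-1\}$ I would exhibit an $L$-cross-intersecting pair meeting the bound using the extremal configurations of \Cref{thm:t-avoiding-double}: for $s \ge 2$ take $\mA = [n]^{(\le s-1)}$ and $\mB = \mP[n]$, so that $|A \cap B| \le |A| \le s-1$ guarantees all cross-intersections lie in $L$ while $d(\mA, \mB) = \sum_{k=0}^{s-1}\binom{n}{k}2^{n-k}$; for $s = 1$ take $\mA = \mP(S)$ and $\mB = \mP([n]\setminus S)$. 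I expect the equality analysis, rather than the bound, to demand the only real care: one must verify both that no slack can hide in the forward direction (it cannot, since $t = s$ already pins down $L$) and that restricting to $L \subseteq \{0, 1, \ldots, n\}$ genuinely removes the degenerate families in which $L$ contains every attainable intersection size $0, \ldots, n$ — it is precisely this convention that prevents a false extra equality case.
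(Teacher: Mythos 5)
Your derivation is correct and is essentially the argument the paper intends (it presents the corollary as an immediate consequence of \Cref{thm:t-avoiding-double}): take $t$ to be the least integer missing from $L$, note $t \le s$ by pigeonhole, apply the theorem, and observe that the sum can only be tight when $t = s$, i.e.\ $L = \{0,\dots,s-1\}$, with the extremal pairs from the theorem's classification witnessing equality. Your added convention that $L \subseteq \{0,\dots,n\}$ is a sensible (and, for the ``only if'' direction, genuinely necessary) reading of the statement, but it does not change the route.
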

Of course, the most trivial bound upper bound on $d(\mA, \mB)$ is given by the product $|\mA||\mB|$, and the problem of bounding $|\mA||\mB|$
for $L$-cross-intersecting $(\mA, \mB)$ has been studied before. For example, Keevash and Sudakov~\cite{crossIntersection} proved that if $L$ is a set 
of $s$ nonnegative integers and $n$ is sufficiently large (depending on $s$), then $|\mA||\mB| \le \sum_{k=0}^{s-1}\binom{n}{k}2^n$ for any $L$-cross-intersecting 
pair $(\mA, \mB)$ in $\mP[n] \times \mP[n]$, with equality if and only if $L = \{0, \ldots, s-1\}$. The example $\mA = [n]^{(\leq s - 1)}, \mB = \mP[n]$ shows that this result is tight (and also shows that \Cref{cor:Lintersecting} is tight). It is still unknown whether this bound on $|\mA||\mB|$ holds for every $s$ and $n$. More results are known when $L = \{\ell\}$ consists of a single intersection size (see~\cite{alon-lubetzky} and~\cite{crossIntersection} for more details).
 The only general upper bound (holding for all $s$, $n$) was given by Sgall~\cite{Sgall}. 
In contrast, note that our bound in \Cref{cor:Lintersecting} holds for all $s$ and $n$.

Motivated by \Cref{thm:t-avoiding-double}, it is natural to ask what happens to the parameter $d(\mA, \mB)$
when we impose that $\mA, \mB \subset [n]^{(r)}$ are both \emph{uniform}. Here it turns out that avoiding an intersection is 
not that much of a restriction, at least when $r$ is fixed and $n$ is large. Consider the following family of examples.

\begin{example}\label{example:avoiding-uniform}
For integers $r \ge 1$, $s \ge 0$ and a non-empty proper subset $X \subset [n]$ let $\mF_{X,s} = \{ F \in [n]^{(r)}: |F\cap X| \ge r-s\}$. 
For a positive integer $t \le r$ and nonnegative integers $a, b$ with $a+b\le t-1$, consider the pair
	$\left(\mF_{X, a},\, \mF_{X^c, b}\right)$.
It is easy to see that this pair is $t$-cross-avoiding (in fact, it is $\{0, \ldots , t-1\}$-cross-intersecting). Intuitively, the number of disjoint pairs should be maximized when $a =\lfloor{\frac{t}{2}}\rfloor$ and $b =\lfloor{\frac{t-1}{2}}\rfloor$ are as equal as possible.  A simple calculation shows that $d\left(\mF_{X, \lfloor{\frac{t}{2}\rfloor}},\, \mF_{X^c, \lfloor{\frac{t-1}{2}\rfloor}}\right) = \Theta_{r, t}\left(n^{2r}\right)$ and $|\mF_{X, \lfloor{\frac{t}{2}\rfloor}}||\mF_{X^c, \lfloor{\frac{t-1}{2}\rfloor}}| \sim d\left(\mF_{X, \lfloor{\frac{t}{2}\rfloor}},\, \mF_{X^c, \lfloor{\frac{t-1}{2}\rfloor}}\right)$,
when $|X| \sim cn$ for some constant $c \in (0, 1)$.
\end{example}

While we began our investigation by considering maximizing the number of disjoint pairs, this example suggests that the problem of determining the maximum number of disjoint pairs in a $t$-cross-avoiding pair $(\mA, \mB)$ of $r$-uniform set systems is roughly equivalent to determining the maximum of the product $|\mA||\mB|$ when $n$ is large and $r, t$ remain fixed. In other words, good upper bounds on $|\mA||\mB|$ translate into good upper bounds on $d(\mA, \mB)$. To formalize this we shall introduce two functions. Let 
\begin{itemize}
	\item $d(n, r, t) = \max \{d(\mA, \mB): (\mA, \mB) \subset [n]^{(r)} \times [n]^{(r)} \text{ is } t\text{-cross-avoiding} \}$,
	
	\item $p(n, r, t) = \max \{|\mA||\mB| : (\mA, \mB) \subset [n]^{(r)} \times [n]^{(r)} \text{ is } t\text{-cross-avoiding} \}$.
\end{itemize}

We prove the following theorem, which states that these two functions are asymptotically equivalent. Here, and in the sequel, we assume that $r$ and $t$ are fixed and $n \rightarrow \infty$.
\begin{thm}\label{thm:disjoint-to-product}
Let $r \ge t \ge 1$ be integers. Then
\[
	p(n, r, t) = (1 + o(1))d(n, r, t),
\]
as $n \rightarrow \infty$.
\end{thm}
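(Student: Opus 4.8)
The plan is to exploit the fact that, when $r$ and $t$ are fixed and $n$ is large, overwhelmingly most pairs of $r$-sets in $[n]$ are disjoint, so that for \emph{any} pair of $r$-uniform families the disjoint pairs already account for almost all of the product. Since trivially $d(\mA,\mB) \le |\mA||\mB|$, every $t$-cross-avoiding pair satisfies $d(\mA,\mB) \le p(n,r,t)$, and taking the pair that maximizes $d$ gives $d(n,r,t) \le p(n,r,t)$. It therefore suffices to prove the reverse asymptotic inequality $d(n,r,t) \ge (1 - o(1))\,p(n,r,t)$.

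For the key step I would bound the number of \emph{intersecting} pairs among all $r$-sets. For a fixed $A \in [n]^{(r)}$, a union bound over the elements of $A$ shows that the number of $B \in [n]^{(r)}$ with $A \cap B \neq \varnothing$ is at most $r\binom{n-1}{r-1} = O(n^{r-1})$. Summing over the $\binom{n}{r} = O(n^{r})$ choices of $A$, the total number of intersecting pairs in $[n]^{(r)} \times [n]^{(r)}$ is $O(n^{2r-1})$. Since the intersecting pairs inside $\mA \times \mB$ are a subset of all intersecting pairs of $r$-sets, for every $\mA, \mB \subseteq [n]^{(r)}$ we obtain
\[
 d(\mA, \mB) \;\ge\; |\mA||\mB| - O(n^{2r-1}).
\]
Now choose a $t$-cross-avoiding pair $(\mA,\mB)$ attaining $|\mA||\mB| = p(n,r,t)$ (the maximum is attained since everything is finite). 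As this pair is $t$-cross-avoiding, $d(n,r,t) \ge d(\mA,\mB) \ge p(n,r,t) - O(n^{2r-1})$.

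To finish I must check that the error term $O(n^{2r-1})$ is negligible against $p(n,r,t)$, i.e.\ that $p(n,r,t) = \Omega(n^{2r})$. This follows from one explicit construction: fix $X \subset [n]$ with $|X| = \lfloor n/2 \rfloor$ and set $\mA = \{A \in [n]^{(r)} : A \subseteq X\}$ and $\mB = \{B \in [n]^{(r)} : B \subseteq [n]\setminus X\}$. Any such $A, B$ are disjoint, so $|A\cap B| = 0 \neq t$ (using $t \ge 1$) and the pair is $t$-cross-avoiding; moreover $|\mA||\mB| = \binom{\lfloor n/2\rfloor}{r}\binom{\lceil n/2\rceil}{r} = \Theta(n^{2r})$, which is the $a=b=0$ case of \Cref{example:avoiding-uniform}. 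Hence $p(n,r,t) \ge |\mA||\mB| = \Omega(n^{2r})$. Combining this with the previous paragraph,
\[
 p(n,r,t) - O(n^{2r-1}) \le d(n,r,t) \le p(n,r,t),
\]
and dividing by $p(n,r,t) = \Omega(n^{2r})$ gives $d(n,r,t)/p(n,r,t) = 1 - O(1/n)$, equivalently $p(n,r,t) = (1+o(1))\,d(n,r,t)$, as required.

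The argument has no substantial obstacle beyond order-of-magnitude bookkeeping: the only point to verify carefully is that the count of intersecting pairs, $O(n^{2r-1})$, is of strictly smaller order than the extremal product $\Theta(n^{2r})$. The hypothesis $r \ge t \ge 1$ enters only to supply a valid $t$-cross-avoiding family witnessing $p(n,r,t) = \Omega(n^{2r})$; the intersecting-pairs estimate does not depend on $t$ at all. I would present the intersecting-pairs count as a short standalone estimate and then assemble the three displayed inequalities.
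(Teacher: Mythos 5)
Your proof is correct, and it is slightly more direct than the paper's. Both arguments share the same skeleton: the trivial bound $d(n,r,t)\le p(n,r,t)$, the identity $|\mA||\mB| = d(\mA,\mB) + \#\{(A,B): A\cap B\neq\varnothing\}$ with an $O_{r}(n^{2r-1})$ bound on the second term, and the construction $\bigl(X^{(r)},(X^c)^{(r)}\bigr)$ witnessing that the extremal value is $\Omega(n^{2r})$. The difference is that the paper routes the upper bound through the intermediate quantity $p^*(n,r,t)$: it first invokes the delta-system reduction (\Cref{lem:technical1}) to pass from a $t$-cross-avoiding pair to a $\{0,\ldots,t-1\}$-cross-intersecting pair at a cost of $O_{r,t}(n^{2r-1})$, and only then counts intersecting pairs, using the cross-intersecting property to bound them. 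Your observation that the count of intersecting pairs in $[n]^{(r)}\times[n]^{(r)}$ is unconditionally $O(n^{2r-1})$ --- no intersection hypothesis needed --- makes that detour unnecessary, so you get $d(n,r,t)\ge p(n,r,t)-O(n^{2r-1})$ in one step. What the paper's route buys is not needed for this theorem (\Cref{lem:technical1} is used again for Theorems~\ref{thm:1-avoiding-uniform-product} and~\ref{thm:2-avoiding-uniform}, which is presumably why the authors pass through $p^*$ here as well); your version proves the statement with strictly less machinery. All the quantitative checks in your write-up (the union bound $r\binom{n-1}{r-1}$ per fixed $A$, the $\Theta(n^{2r})$ lower bound, and the division by $p(n,r,t)$) are sound.
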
	
In view of \Cref{thm:disjoint-to-product}, it is perhaps more natural to provide upper bounds for the function $p(n, r, t)$ in the context of trying to obtain 
upper bounds for $d(n, r, t)$.
The function $p(n, r, t)$ has been investigated before by Frankl and R{\"o}dl~\cite{FranklRodl} when $r$ and $t$ are both linear in $n$. For single families, Ellis, Keller, and Lifshitz~\cite{ellis-keller-lifshitz} showed more recently that the maximum size of any $t$-avoiding $\mF \subset [n]^{(r)}$ with $o(n) \le r \le n/2 - o(n)$ coincides with the maximum size of a $(t+1)$-intersecting $r$-uniform family, 
answering a question of Erd{\H{o}}s and S{\'o}s~\cite{erdos-proc} for a wide range of $r$. 

 When $n \ge n_0(r, t)$, the problem of determining $p(n, r, t)$ can be viewed as the cross-analogue of a problem resolved by Frankl and F{\"u}redi~\cite{FranklFurediuniform}. They showed, in particular, that if $n$ is sufficiently large and $\mF \subset [n]^{(r)}$ is $t$-avoiding, then the family consisting of all $r$-sets containing a fixed $(t+1)$-set is optimal.
Now, note that we may assume that $t < r$, as trivially $p(n, r, r) = \frac{1}{4}\binom{n}{r}^2$.
We make 
progress in determining $p(n, r, t)$ in the first two cases, $t =1$ and $t=2$. 

\begin{restatable}{thm}{thmAvoidingUniformProduct}\label{thm:1-avoiding-uniform-product}
Let $r \geq 2$ be an integer. There exists $n_0 = n_0(r)$ such that if $n > n_0$ and $(\mA, \mB)$ is a pair of $1$-cross-avoiding $r$-uniform set systems, then
\[
	|\mA||\mB| \leq \binom{\lfloor{n/2}\rfloor}{r}\binom{\lceil{n/2}\rceil}{r}.
\]
\end{restatable}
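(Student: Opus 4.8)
The plan is to isolate the clean \emph{cross-disjoint} case, where the bound holds with equality, and then to argue that for large $n$ the maximum is forced to occur there. Suppose first that every $A \in \mA$ is disjoint from every $B \in \mB$. Writing $W = \bigcup_{A \in \mA} A$ for the support of $\mA$, cross-disjointness forces each $B \in \mB$ to avoid $W$ entirely, so that $\mB \subseteq \binom{[n]\setminus W}{r}$ while $\mA \subseteq \binom{W}{r}$. Hence, with $w = |W|$,
\[
	|\mA||\mB| \le \binom{w}{r}\binom{n-w}{r} \le \binom{\floor{n/2}}{r}\binom{\ceil{n/2}}{r},
\]
the last step being the elementary fact that $m \mapsto \binom{m}{r}\binom{n-m}{r}$ is maximized at $m = \floor{n/2}$ (log-concavity of $m \mapsto \binom{m}{r}$). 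This already gives the theorem in this case, with equality precisely at the balanced split; so the entire difficulty lies in controlling pairs that genuinely intersect.

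The key tool for this is a \emph{link} observation. For $z \in [n]$ put $\mA(z) = \{A \setminus \{z\} : z \in A \in \mA\}$ and define $\mB(z)$ analogously. If $z \in A \cap B$ with $A \in \mA$, $B \in \mB$, then $|A \cap B| \ge 1$, and since the pair is $1$-cross-avoiding we must in fact have $|A \cap B| \ge 2$; thus $(A \setminus \{z\}) \cap (B \setminus \{z\}) \ne \varnothing$. Consequently $\mA(z)$ and $\mB(z)$ form a cross-intersecting pair of $(r-1)$-uniform families on $[n]\setminus\{z\}$, and the classical product bound for cross-intersecting families yields $|\mA(z)||\mB(z)| \le \binom{n-2}{r-2}^2$ once $n$ is large. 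Summing the identity $\sum_{z}|\mA(z)||\mB(z)| = \sum_{(A,B)\in\mA\times\mB}|A\cap B|$ and using that every intersecting cross-pair contributes at least $2$, the number $e$ of cross-pairs with $|A\cap B|\ge 2$ obeys $2e \le n\binom{n-2}{r-2}^2 = O(n^{2r-3})$. Hence $|\mA||\mB| = d(\mA,\mB) + O(n^{2r-3})$, so intersecting pairs are negligible against the target $\Theta(n^{2r})$ and the balanced split must be approached through the disjoint structure.

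To upgrade this to the exact bound I would establish a stability statement: any near-optimal $1$-cross-avoiding pair has nearly disjoint supports, and in fact the overlap $Z = W \cap U$ (where $U = \bigcup_{B \in \mB} B$) can be forced to be empty at the optimum. The natural mechanism is that the sets avoiding $Z$ live in complementary cliques on a ground set of size at most $n - |Z|$, so they contribute at most $\binom{\floor{(n-|Z|)/2}}{r}\binom{\ceil{(n-|Z|)/2}}{r}$, which falls below the target by $\Theta(|Z|\,n^{2r-1})$, while the sets meeting $Z$ are confined to the small cross-intersecting links from the previous paragraph. Given the cited Frankl--F\"uredi resolution of the single-family uniform problem, I expect the cleanest route to the stability step to be the $\Delta$-system (kernel) method for fixed $r$ and $n \to \infty$, which reduces an optimal configuration to a bounded kernel structure on which the cross-avoiding condition becomes a finite optimization.

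The main obstacle is precisely this final accounting. The ``savings'' from shrinking the effective ground set by $|Z|$ and the ``cost'' of the sets passing through $Z$ are \emph{both} of order $n^{2r-1}$, so a crude estimate does not close the gap; one must deploy the cross-intersecting product bound on each link $\mA(z),\mB(z)$ quantitatively---ruling out both $|\mA(z)|$ and $|\mB(z)|$ being close to $\binom{n-1}{r-1}$---and, crucially, handle the mixed interactions between sets through $Z$ and sets avoiding it, which are \emph{not} decoupled by the condition since $1$-cross-avoidance is imposed globally on $\mA \times \mB$. It is this coupling that makes the support-splitting insufficient on its own and, I expect, forces the stability argument. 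The hypothesis $n > n_0(r)$ enters both through the cross-intersecting product bound and through this $\Theta(n^{2r-1})$-level balancing.
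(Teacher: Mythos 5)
There is a genuine gap: the proof is complete only in the cross-disjoint case, and the step that would force the optimum into that case is announced but never carried out. Your support-splitting argument and the link identity $\sum_z |\mA(z)||\mB(z)| = \sum_{(A,B)}|A\cap B|$ are both correct, and the conclusion $|\mA||\mB| = d(\mA,\mB) + O(n^{2r-3})$ is a true (and nice) observation --- but it bounds neither $|\mA||\mB|$ nor $d(\mA,\mB)$, so everything rests on the ``stability statement'' that an optimal pair has an empty support overlap $Z$. You explicitly concede that your accounting for this step leaves a gain and a loss that are both $\Theta(n^{2r-1})$ and that ``a crude estimate does not close the gap''; that is exactly the point at which the proof stops being a proof. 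The specific shortfall is that Pyber's cross-intersecting product bound on the links only tells you that \emph{one} of $|\mA(z)|,|\mB(z)|$ is $O(n^{r-2})$; the other may still be of order $n^{r-1}$, so deleting all sets through $z$ can cost $\Theta(n^{2r-1})$ in the product, which is not visibly dominated by the savings from shrinking the ground set.

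The paper closes precisely this gap with the $\Delta$-system device you gesture at, but used in a sharper way. Call $x$ $\mA$-good if $\mA$ contains a sunflower of size $r$ with core $\{x\}$; then no $B\in\mB$ meets an $\mA$-good point, and --- crucially --- for any $x$ that is \emph{not} $\mA$-good, the link satisfies $|\mA(x)| \le 2^{(r-1)^2}\binom{n}{r-2} = O(n^{r-2})$, a full factor of $n$ below the trivial bound and holding for \emph{each} family separately, not just for the product of the two links. With this in hand one runs a local exchange at any point $x$ that is neither $\mA$-good nor $\mB$-good: delete the $O(n^{r-2})$ sets of $\mA(x)$ and insert into $\mB$ all sets $\{x\}\cup B$ with $B \in Y^{(r-1)}$, where $Y$ is the set of $\mB$-good points (shown beforehand to have size $\Omega(n)$). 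The gain $|\mA|\binom{|Y|}{r-1} = \Omega(n^{2r-1})$ strictly dominates the loss $O(n^{r-2})\cdot O(n^r) = O(n^{2r-2})$, contradicting maximality; hence $Z=\varnothing$, $\mA = X^{(r)}$, $\mB = Y^{(r)}$ with $X, Y$ complementary, and your first paragraph finishes the proof. Without this quantitative link bound and exchange step, your argument does not establish the theorem.
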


This result is clearly tight: just consider the pair $\left(\mF_{X, 0}, \mF_{X^c, 0}\right)$ where $X \subset [n]$ has size $\lfloor{n/2}\rfloor$. It is also tight for the problem of maximizing $d(\mA, \mB)$. In other words, we have that 
$d(n, r, 1) = p(n, r, 1) = \binom{\lfloor{n/2}\rfloor}{r}\binom{\lceil{n/2}\rceil}{r}$ for $n$ sufficiently large.

Our last theorem gives an asymptotically tight upper bound for $p(n, r, 2)$.

\begin{thm}\label{thm:2-avoiding-uniform}
Suppose $r \geq 3$ and let $(\mA, \mB)$ be a pair of $2$-cross-avoiding $r$-uniform set systems.
Then
\[
	|\mA||\mB| \leq (\gamma_r + o(1))\binom{n}{r}^2,
\]
where $\gamma_r = \max_{\alpha \in [0,1]} \{\alpha^r(1-\alpha)^r + r\alpha^{r+1}(1-\alpha)^{r-1}\}$.
\end{thm}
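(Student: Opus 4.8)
The plan is to read the $2$-cross-avoiding condition off the \emph{pair links} and thereby reduce the problem to a purely graph-theoretic optimization. For a $2$-set $P\subseteq[n]$ and a family $\mathcal{G}\subseteq[n]^{(r)}$, write $\mathcal{G}_P=\{G\setminus P: G\in\mathcal{G},\ P\subseteq G\}$ for the link of $\mathcal{G}$ at $P$, an $(r-2)$-uniform family on $[n]\setminus P$. The starting point is the exact equivalence: for $A,B\supseteq P$ we have $A\cap B=P$ iff $(A\setminus P)\cap(B\setminus P)=\varnothing$, so $(\mA,\mB)$ is $2$-cross-avoiding if and only if $(\mA_P,\mB_P)$ is \emph{cross-intersecting} for every pair $P$. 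By the cross-intersecting analogue of the Erd\H{o}s–Ko–Rado theorem (see, e.g.,~\cite{FranklTokushige}), for $n$ large this yields, for each pair $P$,
\[
	|\mA_P|\,|\mB_P|\le\binom{n-3}{r-3}^2 .
\]
Since $\binom{n-3}{r-3}=\Theta(n^{r-3})$ whereas $\binom{n-2}{r-2}=\Theta(n^{r-2})$, this produces a \textbf{dichotomy}: fixing a slowly decaying threshold $\varepsilon=n^{-1/2}$ and calling $P$ \emph{$\mA$-heavy} if $|\mA_P|\ge\varepsilon\binom{n-2}{r-2}$ (and \emph{$\mB$-heavy} analogously), no pair can be both $\mA$-heavy and $\mB$-heavy once $n$ is large. (The role of $r\ge 3$ is exactly that the links are at least $1$-uniform.)

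Next I would pass to the graph $H$ on vertex set $[n]$ whose edges are the $\mA$-heavy pairs, and argue that $\mA$ is essentially the set of $r$-cliques of $H$ while $\mB$ is essentially the set of $r$-anticliques (independent $r$-sets) of $H$. For $\mA$: using $|\mA|=\tfrac{1}{\binom{r}{2}}\sum_P|\mA_P|$, the number of $A\in\mA$ that are \emph{not} cliques of $H$ is at most $\sum_{P\text{ not }\mA\text{-heavy}}|\mA_P|\le\varepsilon\binom{r}{2}\binom{n}{r}=o\!\left(\binom{n}{r}\right)$, so almost every member of $\mA$ is an $r$-clique, giving $|\mA|\le k_r(H)+o(\binom{n}{r})$, where $k_r(H)$ denotes the number of $r$-cliques. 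For $\mB$: any $B\in\mB$ containing an $\mA$-heavy pair $P$ contributes to $\mB_P$, and by the dichotomy each such $\mB_P$ has size $O(n^{r-7/2})$; summing over the at most $\binom{n}{2}$ heavy pairs shows only $o(\binom{n}{r})$ members of $\mB$ contain a heavy pair, so $|\mB|\le i_r(H)+o(\binom{n}{r})$, where $i_r(H)=k_r(\overline H)$ counts $r$-anticliques. Multiplying,
\[
	|\mA|\,|\mB|\le k_r(H)\,i_r(H)+o\!\left(\binom{n}{r}^2\right)\le\max_{H'}k_r(H')\,i_r(H')+o\!\left(\binom{n}{r}^2\right).
\]

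It therefore remains to prove the self-contained extremal statement $\max_{H}k_r(H)\,i_r(H)=(\gamma_r+o(1))\binom{n}{r}^2$, and this is the \textbf{main obstacle}. The natural extremal graph is a disjoint union of a clique on a set $X$ with $|X|=\alpha n$ together with an independent set on $X^c$: then $k_r(H)=(1+o(1))\alpha^r\binom{n}{r}$ and $i_r(H)=(1+o(1))\big((1-\alpha)^r+r\alpha(1-\alpha)^{r-1}\big)\binom{n}{r}$, whose product is exactly $(\gamma_r+o(1))\binom{n}{r}^2$; this is precisely the pair $(\mF_{X,0},\mF_{X^c,1})$ of \Cref{example:avoiding-uniform}, which shows the bound is asymptotically tight. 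The difficulty is that $k_r$ favors dense graphs while $i_r$ favors sparse ones, so one must show the product is maximized by this clique-plus-independent-set (equivalently, its complete-split complement). I would attempt this by a Zykov-type symmetrization on pairs of non-adjacent vertices, tracking the effect of merging neighborhoods on the \emph{product} $k_r\cdot i_r$ and showing one can always move toward a threshold graph without decreasing it, after which the single-variable maximum over $\alpha=|X|/n$ gives $\gamma_r$. Verifying that such a symmetrization step never decreases the product, and controlling the lower-order error terms uniformly through all of the above estimates, is where the bulk of the work lies.
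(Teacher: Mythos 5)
Your reduction to a graph problem is sound, and it is a legitimate alternative to the paper's route: where the paper uses the delta-system argument of \Cref{lem:technical1} to pass from a $2$-cross-avoiding pair to a $\{0,1\}$-cross-intersecting pair (and thence to a red/blue edge-colouring of $K_n$ with $\mA$ inducing red cliques and $\mB$ blue cliques), you read the condition off the pair links, invoke the cross-intersecting Erd\H{o}s--Ko--Rado bound $|\mA_P||\mB_P|\le\binom{n-3}{r-3}^2$, and use a heavy-pair dichotomy to conclude that, up to $o(\binom{n}{r})$ errors, $\mA$ consists of $r$-cliques and $\mB$ of $r$-independent sets of the heavy-pair graph $H$. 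The bookkeeping there checks out (with $\varepsilon=n^{-1/2}$ the exceptional parts of $\mA$ and $\mB$ are indeed $O(n^{r-1/2})$ and $O(n^{r-3/2})$ respectively), and both routes land on the same statement: one must show $k_r(H)\,k_r(\overline{H})\le(\gamma_r+o(1))\binom{n}{r}^2$.

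The genuine gap is that you do not prove this statement, and it is the entire substance of the theorem. The paper does not prove it from scratch either: it quotes it as \Cref{thm:densities}, the clique/independent-set density theorem of Huang, Linial, Naves, Peled and Sudakov~\cite{HuangLinialNavesPeledSudakov}, which says that if $d(\overline{K}_r;G)\ge p$ then $d(K_r;G)\le M_{r,r,p}+o(1)$; optimizing $p\cdot M_{r,r,p}$ over $p=\alpha^r$ gives $\gamma_r$. That is a difficult result, and your proposed Zykov-type symmetrization is unlikely to deliver it: duplicating a vertex to increase $k_r$ typically decreases $i_r$, so the product $k_r\cdot i_r$ is not monotone under any obvious merging step, and the very shape of $M_{r,s,p}$ (a maximum of two expressions corresponding to two competing extremal structures) signals that no single symmetrization scheme converges to one canonical extremal graph. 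You correctly flag this as the main obstacle, but an acknowledged obstacle is still an unproven step; to complete the argument you should either cite the Huang--Linial--Naves--Peled--Sudakov theorem as the paper does, or supply a genuine proof of the density inequality, which would be a substantial piece of work in its own right.
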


The pair $\left(\mF_{X, 1}, \mF_{X^c, 0}\right)$ with $|X| = \alpha n$, where $\alpha \in [0, 1]$ gives the maximum value $\gamma_r$ above, shows that this upper bound is asymptotically optimal. 
Moreover, using \Cref{thm:disjoint-to-product}, we have that $p(n, r, 2) = (\gamma_r + o(1))\binom{n}{r}^2$ and 
$d(n, r, 2) = (\gamma_r + o(1))\binom{n}{r}^2$. Notice that in the case of both \Cref{thm:1-avoiding-uniform-product} and 
\Cref{thm:2-avoiding-uniform}, pairs $\left(\mF_{X, a}, \mF_{X^c, b}\right)$ where $a$ and $b$ are as equal as possible are optimal. We conjecture that this 
phenomenon persists for higher forbidden intersection sizes (see \Cref{sec:final}).

\subsection{Organization and Notation}

The remainder of this paper is organized as follows. In \Cref{sec:dp-nonuniform} we prove \Cref{thm:t-avoiding-double}, which 
implies \Cref{thm:t-avoiding-single}. In \Cref{sec:dp-uniform}, we shall prove \Cref{thm:disjoint-to-product}, \Cref{thm:1-avoiding-uniform-product}, and \Cref{thm:2-avoiding-uniform}. In the final section, we shall 
state some open problems.

Our notation is standard. For a set $X$ we let $\mP(X)$ denote the power-set of $X$ and
$X^{(r)}$ (resp., $X^{(\leq r)}$) denote the collection of all $r$-element subsets of $X$ (resp., subsets of $X$ of size 
at most $r$). We shall simply write $\mP[n]$ for $\mP([n])$. Any set system $\mF \subset X^{(r)}$ is said to be $r$-\emph{uniform} and its elements are $r$-\emph{sets}. For $\mF \subset \mP[n]$ and $T \subset [n]$ we let $\mF(T)$ denote the collection 
of sets in $\mF$ that contain $T$. When $T = \{x\}$ is a singleton we shall simply write $\mF(x)$.

\section{Disjoint pairs in $t$-cross-avoiding set systems}\label{sec:dp-nonuniform}

Our aim in this section is to establish \Cref{thm:t-avoiding-double}, which we restate for convenience.
\thmtavoidingdouble*

Let us point out one fact before giving a proof of the theorem.
Note that if we let $f(n, t) = \sum_{k=0}^{t-1} \binom{n}{k}2^{n-k}$, then $f$ satisfies the recurrence
\[
	f(n, t) = 2f(n-1, t) + f(n-1, t-1), 
\]
for natural numbers $n, t \geq 1$.

\begin{proof}
We shall apply induction on $n$ and $t$. The base case $t = 0$ holds trivially for every value of $n$.
Therefore, we fix $t >0$ and assume the theorem holds for $t' < t$ (and every value of $n$), and we may suppose 
the theorem holds for $t' = t$ and all $n' < n$. We aim to show it holds for $t' = t$ and $n' = n$.

To do so, suppose that $(\mA, \mB) \subset \mP[n] \times \mP[n]$ is $t$-cross-avoiding. We shall split $\mA$ and $\mB$ into certain subfamilies. More specifically, let 
$\mA_n =\{A\in\mA: n \in A\}$ and $\mA_{0}=\{A\in\mA: n\not\in \mA\}$, and define $\mB_n$ and $\mB_0$ analogously. We further identify three subfamilies of $\mA_n$, namely, 
\begin{itemize}

\item $\mA^{*}_n=\{A\in\mA_n: A\setminus \{n\}\in \mA\}$,
\item $\mA_n^{t+1}=\{A\in\mA_n: \exists B\in \mB_n \text{ with } |A\cap B|=t+1\}$, and 
\item $\mathcal{X}=\mA_n \setminus (\mA^{*}_n \cup \mA_n^{t+1})$. 
\end{itemize}


We define similarly the corresponding subfamilies $\mB^*_n ,\mB^{t+1}_n$, and $\mathcal{Y} = \mB_n\setminus (\mB_n^* \cup \mB_n^{t+1})$ of $\mB_n$.
Note that the subfamilies defined above actually partition $\mA_n$ and $\mB_n$. Indeed, suppose $A \in \mA^*_n \cap \mA_n^{t+1}$. Then there exists $B \in \mB_n$ 
such that $|A \cap B| = t+1$. But we also have that $A \setminus \{n\} \in \mA$ and then $|A \setminus \{n\} \cap B| = t$, a contradiction. The same argument shows that 
$\mB_n^*$ and $\mB_n^{t+1}$ are disjoint.

For a subset $A \subset [n]$, a family $\mathcal{F} \subset \mP[n]$, and $i \in [n]$ let $D_i(A) = A\setminus \{i\}$ and 
\[
	D_i(\mathcal{F}) = \{ D_i(A): A \in \mathcal{F} \}.
\]
To reduce clutter we shall simply write $D$ for $D_n$. Our aim is to apply $D$ to suitable pairs of families and apply induction. 
Indeed, consider the pairs
\[
(\mA_0 \cup D(\mathcal{X}\cup \mA_n^{t+1}), \mB_0 \cup D(\mathcal{Y})),
\]
 and 
 \[
(\mA_0 \cup D(\mathcal{X}), \mB_0 \cup D(\mathcal{Y}\cup \mB_n^{t+1})). 
\]
Of course, each of the families in these pairs belongs to $\mP[n-1]$. We also need that the above pairs are $t$-cross-avoiding, which we formulate in the following claim.
\begin{claim}\label{claim:t-avoiding}
$(\mA_0 \cup D(\mathcal{X}\cup \mA_n^{t+1}), \mB_0 \cup D(\mathcal{Y}))$ and $(\mA_0 \cup D(\mathcal{X}), \mB_0 \cup D(\mathcal{Y}\cup \mB_n^{t+1}))$ are $t$-cross-avoiding pairs of set systems.
\end{claim}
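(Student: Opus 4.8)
The plan is to verify the defining inequality $|A' \cap B'| \neq t$ directly, by a case analysis according to which constituent family each of $A'$ and $B'$ is drawn from. Since the two pairs in the claim differ only by interchanging the roles of $\mA$ and $\mB$ (and correspondingly of $\mathcal{X}, \mA_n^{t+1}$ with $\mathcal{Y}, \mB_n^{t+1}$), and since both the construction of the subfamilies and the $t$-cross-avoiding property are symmetric under this swap, I would prove the claim only for the first pair $(\mA_0 \cup D(\mathcal{X} \cup \mA_n^{t+1}),\, \mB_0 \cup D(\mathcal{Y}))$ and then note that the second pair follows verbatim after relabelling. So I fix $A'$ in the first family and $B'$ in the second and aim to show $|A' \cap B'| \neq t$.

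First I would dispatch the cases in which at least one of $A', B'$ avoids the element $n$. If $A' \in \mA_0$ and $B' \in \mB_0$ then $A' \in \mA$, $B' \in \mB$ and the conclusion is immediate from the hypothesis on $(\mA, \mB)$. If $A' \in \mA_0$ but $B' = D(B) = B \setminus \{n\}$ for some $B \in \mathcal{Y} \subseteq \mB_n$, then because $n \notin A'$ we have $A' \cap B' = A' \cap B$, so $|A' \cap B'| = |A' \cap B| \neq t$ as $A' \in \mA$, $B \in \mB$; the mirror subcase $A' \in D(\mathcal{X} \cup \mA_n^{t+1})$, $B' \in \mB_0$ is identical. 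The common principle is that deleting $n$ from a set does not change its intersection with a set that already omits $n$, so such pairs inherit the property straight from $(\mA, \mB)$.

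The heart of the matter is the remaining case, where $A' = A \setminus \{n\}$ with $A \in \mathcal{X} \cup \mA_n^{t+1}$ and $B' = B \setminus \{n\}$ with $B \in \mathcal{Y}$, so that both $A$ and $B$ contain $n$. Then $A' \cap B' = (A \cap B) \setminus \{n\}$ and, since $n \in A \cap B$, we get $|A' \cap B'| = |A \cap B| - 1$; thus the task reduces to showing $|A \cap B| \neq t+1$ (the value $t$ being already excluded). This is exactly what the definitions of $\mathcal{X}$, $\mA_n^{t+1}$ and $\mathcal{Y}$ are built to guarantee, and I would establish it by splitting on the membership of $A$. If $A \in \mathcal{X}$, then $A \notin \mA_n^{t+1}$ by definition, so $A$ meets no member of $\mB_n$ in $t+1$ elements; as $B \in \mathcal{Y} \subseteq \mB_n$, we get $|A \cap B| \neq t+1$. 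If instead $A \in \mA_n^{t+1}$, then I would argue from the $B$ side: since $B \in \mathcal{Y} = \mB_n \setminus (\mB_n^* \cup \mB_n^{t+1})$ we have $B \notin \mB_n^{t+1}$, so no member of $\mA_n$ meets $B$ in $t+1$ elements, and as $A \in \mA_n^{t+1} \subseteq \mA_n$ this again forces $|A \cap B| \neq t+1$. In either subcase $|A' \cap B'| \neq t$, which completes the first pair.

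The one step demanding genuine care is this last case, specifically the asymmetric placement of $\mA_n^{t+1}$ and $\mB_n^{t+1}$ across the two pairs. The point is that shifting down by deleting $n$ converts an intersection of size $t+1$ into a forbidden intersection of size $t$, so any two families shifted down together must contain no pair meeting in exactly $t+1$ elements. Grouping $\mA_n^{t+1}$ with $\mathcal{X}$, while keeping only $\mathcal{Y}$ on the other side, is exactly what prevents this, via the two complementary appeals to the definitions above; the mirrored grouping handles the second pair. Once the split on the membership of $A$ (respectively $B$) is set up, each sub-subcase is a one-line consequence of a defining property, so apart from this structural observation the verification is routine.
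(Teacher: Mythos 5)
Your proof is correct and follows essentially the same route as the paper's: both arguments reduce to the observations that deleting $n$ leaves intersections with $n$-free sets unchanged, and that a size-$t$ intersection between two sets both obtained by deleting $n$ would force a size-$(t+1)$ intersection upstairs, which is ruled out because $B\cup\{n\}\in\mY$ means $B\cup\{n\}\notin\mB_n^{t+1}$. The only difference is cosmetic: the paper argues by contradiction and handles the final case in one stroke via $B\cup\{n\}\notin\mB_n^{t+1}$, whereas you add an unnecessary (but harmless) further split on whether $A\in\mX$ or $A\in\mA_n^{t+1}$.
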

\begin{proof}
We only prove that the first pair is $t$-cross-avoiding. The second follows by a similar argument. By way of contradiction, 
suppose there exists $A\in \mA_0 \cup D(\mathcal{X}\cup \mA_n^{t+1})$ and $B \in \mB_0 \cup D(\mathcal{Y})$ such that $|A\cap B|= t$.  Clearly, either $B\in \mB$ or $B\cup \{n\} \in \mB$. If $A\in \mA_{0}$, then $|A\cap B|=|A\cap(B\cup \{n\})|=t$, which is a contradiction. So we may assume that $A\cup \{n\}\in \mA$ and similarly $B\cup \{n\}\in \mB$. Hence $|(A\cup \{n\})\cap (B\cup \{n\})|=t+1$ which would imply $B\cup \{n\}\in \mB_n^{t+1}$, which is again a contradiction. This completes the proof. 
\end{proof}

Our second claim exhibits a pair of subfamilies that are, in fact, $(t-1)$-cross-avoiding.
\begin{claim}\label{claim:t-1avoiding}
The pair of set systems $(D(\mA^{*}_n) ,D(\mB^{*}_n))$ is $(t-1)$-cross-avoiding in $\mP[n-1] \times \mP[n-1]$.
\end{claim}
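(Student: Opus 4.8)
The plan is to argue directly by contradiction, exploiting that every set in $D(\mA^{*}_n)$ (resp. $D(\mB^{*}_n)$) lifts back to a member of $\mA$ (resp. $\mB$) that contains $n$. First I would fix an arbitrary pair $(A', B') \in D(\mA^{*}_n) \times D(\mB^{*}_n)$ and unwind the definitions: there exist $A \in \mA^{*}_n$ and $B \in \mB^{*}_n$ with $A' = A \setminus \{n\}$ and $B' = B \setminus \{n\}$. Since $\mA^{*}_n \subseteq \mA_n$ and $\mB^{*}_n \subseteq \mB_n$, both $A$ and $B$ contain $n$, so in fact $A = A' \cup \{n\} \in \mA$ and $B = B' \cup \{n\} \in \mB$, where $n \notin A'$ and $n \notin B'$.

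The crux is then a one-line intersection count: because $n \in A \cap B$ while $n \notin A' \cap B'$,
\[
	|A \cap B| = |(A' \cup \{n\}) \cap (B' \cup \{n\})| = |A' \cap B'| + 1.
\]
Now suppose for contradiction that $|A' \cap B'| = t-1$. Then $|A \cap B| = t$; but $A \in \mA$ and $B \in \mB$, so this contradicts the $t$-cross-avoidance of $(\mA, \mB)$. Hence $|A' \cap B'| \ne t-1$ for every such pair, which is precisely the statement that $(D(\mA^{*}_n), D(\mB^{*}_n))$ is $(t-1)$-cross-avoiding.

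There is no serious obstacle here; the entire content is the bookkeeping above. The only point deserving care is the lifting step — that each element of $D(\mA^{*}_n)$ genuinely arises from a set of $\mA$ containing $n$ — which is what lets one transfer between the ground sets $[n-1]$ and $[n]$ while tracking the intersection size exactly. It is worth noting that the defining feature of the starred families (namely $A \setminus \{n\} \in \mA$) plays no role in this particular claim: only the inclusions $\mA^{*}_n \subseteq \mA_n$ and $\mB^{*}_n \subseteq \mB_n$, together with the fact that $n$ lies in every member, are used. That stronger property is instead what will be needed in the ensuing counting step, where the disjoint pairs of $(D(\mA^{*}_n), D(\mB^{*}_n))$ correspond (via the injective map $D$) to pairs $(A, B) \in \mA^{*}_n \times \mB^{*}_n$ with $A \cap B = \{n\}$, so that applying the inductive hypothesis for $t' = t-1$ to this $(t-1)$-cross-avoiding pair on $[n-1]$ yields a bound of $f(n-1, t-1)$.
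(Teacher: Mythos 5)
Your argument is correct and is essentially identical to the paper's own proof: both lift $A' \in D(\mA^{*}_n)$ and $B' \in D(\mB^{*}_n)$ back to $A' \cup \{n\} \in \mA$ and $B' \cup \{n\} \in \mB$ and observe that a $(t-1)$-intersection downstairs would force a forbidden $t$-intersection upstairs. Your side remark that only the inclusions $\mA^{*}_n \subseteq \mA_n$ and $\mB^{*}_n \subseteq \mB_n$ are used here (not the defining property of the starred families) is also accurate.
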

\begin{proof}
Indeed, suppose there is $A \in D(\mA^*_n)$ and $B \in D(\mB^*_n)$ such that $|A \cap B| = t -1$. But since $A' = A\cup \{n\} \in \mA_n$ and $B'= B \cup \{n\} \in \mB_n$, 
we have that $|A'\cap B'| = t$, a contradiction.
\end{proof}

We shall now count the disjoint pairs $(A, B)$ with $A \in \mA$ and $B \in \mB$ in such a way that every such pair gets counted except those disjoint pairs in $(D(\mA^*_n), D(\mB^*_n))$. The following lemma summarizes this, from which our theorem follows easily. Before stating it we shall rename some families in order to make the statement cleaner. 
Let
\begin{itemize}
\item $(\mA_0 \cup D(\mathcal{X}\cup \mA_n^{t+1}), \mB_0 \cup D(\mathcal{Y})) = (\mF_1, \mF_2)$, and
\item $(\mA_0 \cup D(\mathcal{X}), \mB_0 \cup D(\mathcal{Y}\cup \mB_n^{t+1})) = (\mF_3, \mF_4)$.
\end{itemize}
With this in mind we shall prove the following.
 \begin{lem}\label{lem:counting}
$d(\mF_1, \mF_2)+d(\mF_3, \mF_4) \ge d(\mA,\mB)-d(D(\mA^{*}_n),D(\mB^{*}_n))$.
\end{lem}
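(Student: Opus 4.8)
The plan is to prove Lemma~\ref{lem:counting} by partitioning the disjoint pairs $(A,B) \in \mA \times \mB$ according to which of the families $\mA_0, \mA_n$ and $\mB_0, \mB_n$ their members belong to, and then showing that each such pair---with the sole exception of pairs arising from $\left(D(\mA^*_n), D(\mB^*_n)\right)$---is accounted for on the left-hand side $d(\mF_1,\mF_2) + d(\mF_3,\mF_4)$. The natural case split is by the four combinations for membership of $A$ and $B$ in the ``$0$''-part versus the ``$n$''-part. The cleanest observation is that if $A \cap B = \varnothing$, then $A$ and $B$ cannot both contain $n$; so the case $A \in \mA_n, B \in \mB_n$ never contributes a disjoint pair, and we are left with three genuine cases: $(A,B) \in \mA_0 \times \mB_0$, $(A,B) \in \mA_n \times \mB_0$, and $(A,B) \in \mA_0 \times \mB_n$.

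The main idea is that applying $D = D_n$ does not destroy disjointness when at most one of the two sets contains $n$. First I would handle $(A,B) \in \mA_0 \times \mB_0$: here $A, B \in \mP[n-1]$ already, $A \in \mF_1 \cap \mF_3$ and $B \in \mF_2 \cap \mF_4$, so each such disjoint pair is counted (at least once) on the left. For $(A,B) \in \mA_n \times \mB_0$ with $A \cap B = \varnothing$, I would use the three-way partition $\mA_n = \mA^*_n \cup \mA^{t+1}_n \cup \mX$. If $A \in \mX \cup \mA^{t+1}_n$, then $D(A) = A \setminus \{n\}$ is disjoint from $B$ (since removing $n$ cannot introduce an intersection) and $D(A) \in \mF_1$, $B \in \mF_2$, so the pair is captured by $d(\mF_1,\mF_2)$. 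If instead $A \in \mA^*_n$, then $A \setminus \{n\} \in \mA_0 \subseteq \mF_1 \cap \mF_3$, so this disjoint pair is \emph{also} captured. The symmetric argument handles $(A,B) \in \mA_0 \times \mB_n$ using $d(\mF_3,\mF_4)$: pairs with $B \in \mY \cup \mB^{t+1}_n$ are counted via $D(B) \in \mF_4$, and pairs with $B \in \mB^*_n$ are counted via $B \setminus \{n\} \in \mB_0$.

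The subtle point---and where the $-d(D(\mA^*_n), D(\mB^*_n))$ correction term enters---is the overcounting/undercounting bookkeeping. Since the inequality runs in the direction $\ge$, I only need to guarantee that every disjoint pair of $(\mA,\mB)$ \emph{except} those in $\left(D(\mA^*_n), D(\mB^*_n)\right)$ is counted at least once across $d(\mF_1,\mF_2) + d(\mF_3,\mF_4)$; I do not need to control multiplicities from above. Concretely, I would argue that the map sending a disjoint pair $(A,B)$ of $(\mA,\mB)$ to its image is injective into the disjoint pairs counted on the left, after I remove the pairs $(A',B')$ with $A' \in D(\mA^*_n)$, $B' \in D(\mB^*_n)$, $A' \cap B' = \varnothing$ that are spuriously generated when both $A = A' \cup \{n\}$ and $B = B' \cup \{n\}$ collapse onto the common ground set $[n-1]$ and merge with genuine $\mA_0, \mB_0$ pairs. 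The hard part will be verifying this injectivity carefully: I would track, for each disjoint pair appearing in $d(\mF_1,\mF_2) + d(\mF_3,\mF_4)$, exactly which original pair(s) of $(\mA,\mB)$ it can come from, and confirm that the only disjoint pairs appearing on the left but \emph{not} corresponding to a disjoint pair of $(\mA,\mB)$ are precisely the disjoint pairs of $\left(D(\mA^*_n), D(\mB^*_n)\right)$, which is why subtracting $d(D(\mA^*_n), D(\mB^*_n))$ on the right restores the inequality.

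Once Lemma~\ref{lem:counting} is in hand, the theorem follows quickly: by Claim~\ref{claim:t-avoiding} both $(\mF_1,\mF_2)$ and $(\mF_3,\mF_4)$ are $t$-cross-avoiding pairs in $\mP[n-1]$, so the inductive hypothesis at $(n-1, t)$ gives $d(\mF_i, \mF_{i+1}) \le f(n-1,t)$ for $i \in \{1,3\}$; by Claim~\ref{claim:t-1avoiding} the pair $\left(D(\mA^*_n), D(\mB^*_n)\right)$ is $(t-1)$-cross-avoiding, but since that term appears with a minus sign I would instead need an \emph{upper} bound on $d(\mA,\mB)$, so I rearrange Lemma~\ref{lem:counting} as $d(\mA,\mB) \le d(\mF_1,\mF_2) + d(\mF_3,\mF_4) + d(D(\mA^*_n), D(\mB^*_n))$ and apply the inductive hypothesis for $t-1$ at level $n-1$ to the last term, yielding $d(\mA,\mB) \le 2f(n-1,t) + f(n-1,t-1) = f(n,t)$ by the stated recurrence. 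This closes the induction.
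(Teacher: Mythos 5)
Your overall skeleton matches the paper's: split by membership in $\mA_0/\mA_n$ and $\mB_0/\mB_n$, push sets through $D$, and exploit the fact that disjoint pairs of $(\mA_0,\mB_0)$ are counted twice on the left. But the one step that actually requires an argument --- why the correction term is exactly $d(D(\mA_n^*),D(\mB_n^*))$ --- is both deferred (``the hard part will be verifying this injectivity'') and mischaracterized. You describe the excluded pairs as ones ``spuriously generated'' on the left that do not correspond to disjoint pairs of $(\mA,\mB)$. That is not what happens: since $D(\mA_n^*)\subseteq\mA_0$ and $D(\mB_n^*)\subseteq\mB_0$, every disjoint pair of $(D(\mA_n^*),D(\mB_n^*))$ is itself a genuine disjoint pair of $(\mA,\mB)$. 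The genuinely spurious left-hand pairs are of a different kind (e.g.\ $(D(A),D(B))$ with $A\in\mX$, $B\in\mY$ and $A\cap B=\{n\}$); those only increase the left-hand side and are harmless.

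The real source of the deficit is a multiplicity problem, and your criterion ``every pair is counted at least once'' is too weak to detect it. Take disjoint $A'\in D(\mA_n^*)$ and $B'\in D(\mB_n^*)$. Then $(A',B')$, $(A'\cup\{n\},B')$ and $(A',B'\cup\{n\})$ are three distinct disjoint pairs of $(\mA,\mB)$ (the fourth combination is not disjoint), and all three collapse onto the single left-hand pair $(A',B')$, which is counted exactly twice --- once in $d(\mF_1,\mF_2)$ and once in $d(\mF_3,\mF_4)$ --- because $\mA_n^*$ and $\mB_n^*$ contribute nothing new to $\mF_1,\mF_3$ or $\mF_2,\mF_4$ beyond what $\mA_0$ and $\mB_0$ already supply. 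So exactly one of the three genuine pairs is lost for each such $(A',B')$, and summing gives the deficit $d(D(\mA_n^*),D(\mB_n^*))$. This is also the fix for your case $A\in\mA_n^*$, $B\in\mB_0$: the pair $(A,B)$ is not ``also captured'' for free; it must be assigned to the \emph{second} copy of the slot $(D(A),B)$, which is available precisely when $B\notin D(\mB_n^*)$. Replacing ``counted at least once'' by ``assign each genuine pair to a distinct slot of the left-hand multiset'' and carrying out this accounting (as the paper does) completes the proof; your deduction of the theorem from the lemma is correct as written.
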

\begin{proof}
Let us see how the left-hand side $d(\mF_1, \mF_2) + d(\mF_3, \mF_4)$ counts disjoint pairs. Note that it counts every disjoint pair in $(\mA_n^{t+1} \cup \mX, \mB_0)$ and 
$(\mA_0, \mB_n^{t+1}\cup \mY)$ once (it may count more; namely, disjoint pairs in $(D(\mX), D(\mY))$ that do not exist in $(\mA, \mB)$). Furthermore, it counts disjoint pairs in 
$(\mA_0, \mB_0)$ twice. Such pairs between $\mA_0$ and $\mB_0$ can be broken up into the following three types:
\begin{itemize}
\item those in $(D(\mA^*_n), D(\mB_n^*))$;
\item those in $(D(\mA^*_n), \mB_0 \setminus D(\mB_n^*))$;
\item those in $(\mA_0\setminus D(\mA^*_n), D(\mB_n^*))$.

\end{itemize}
The remaining disjoint pairs to be counted are those in $(\mA_n^*, \mB_0)$ and $(\mA_0, \mB_n^*)$. Since 
\[
d(D(\mA^*_n), \mB_0 \setminus D(\mB_n^*)) = d(\mA_n^*, \mB_0\setminus D(\mB_n^*)),
\]
and, similarly, $d(\mA_0\setminus D(\mA^*_n), D(\mB_n^*)) = d(\mA_0\setminus D(\mA^*_n), \mB_n^*)$, we have that the disjoint pairs in $(\mA_n^*, \mB_0\setminus D(\mB_n^*))$ and
$(\mA_0\setminus D(\mA^*_n), \mB_n^*)$ get counted when we count those disjoint pairs in $(\mA_0, \mB_0)$. Furthermore, since $d(D(\mA^*_n), D(\mB^*_n)) = d(\mA_n^*, D(\mB^*_n))$, the disjoint pairs in $(\mA_n^*, D(\mB^*_n))$ also get counted whenever we count pairs in $(\mA_0, \mB_0)$. As $d(\mF_1, \mF_2) + d(\mF_3, \mF_4)$ counts the disjoint pairs in $(\mA_0, \mB_0)$ twice we can equivalently say that it counts 
\begin{itemize}
\item disjoint pairs in $(\mA_0, \mB_0)$ once;
\item disjoint pairs in $(\mA_0\setminus D(\mA_n^*), \mB_n^*)$ once;
\item disjoint pairs in $(\mA_n^*, \mB_0\setminus D(\mB_n^*))$ once;
\item disjoint pairs in $(\mA_n^*, D(\mB^*_n))$ once.

\end{itemize}
Thus the only disjoint pairs in $(\mA, \mB)$ not counted by $d(\mF_1, \mF_2) +  d(\mF_3, \mF_4)$ are those in $(D(\mA^*_n), \mB_n^*)$, and since $d(D(\mA^*_n), \mB_n^*) = d(D(\mA^*_n), D(\mB^*_n))$, we have that 
\[
	d(\mF_1, \mF_2)+d(\mF_3, \mF_4) \ge d(\mA,\mB)-d(D(\mA^{*}_n),D(\mB^{*}_n)),
\]
as claimed.
\end{proof}

\Cref{thm:t-avoiding-double} now follows easily from \Cref{lem:counting}. Indeed, by \Cref{claim:t-avoiding}, $(\mF_1, \mF_2)$ and $(\mF_3, \mF_4)$ are both $t$-cross-avoiding in 
$\mP[n-1] \times \mP[n-1]$, so by induction we have $d(\mF_1, \mF_2) \le f(n-1, t)$ and $d(\mF_3, \mF_4) \le f(n-1, t)$. By \Cref{claim:t-1avoiding}, $(D(\mA^*_n), D(\mB^*_n)) \subset \mP[n-1] \times \mP[n-1]$ is $(t-1)$-cross-avoiding, and so $d(D(\mA^*_n), D(\mB^*_n)) \le f(n-1, t-1)$. Therefore, by \Cref{lem:counting} and using the recurrence for $f$, we have
\[
	d(\mA, \mB) \le 2f(n-1, t) + f(n-1, t-1) = f(n, t), 
\]
as claimed.
\end{proof}

\subsection{Characterization of extremal examples}\label{subsec:classification}
To end this section, let us characterize the extremal examples occurring in \Cref{thm:t-avoiding-double}. We must break the analysis up into two cases, when $t = 1$ and when $t > 1$, as the extremal behaviour is different. We consider first the case $t > 1$.

\begin{itemize}
\item $ t >1$
\end{itemize}
Observe that when $n=t$ equality is trivially only attained when the families are $(\mP[n] \setminus \{[n]\},\mP[n])=([n]^{(\leq n-1)},\mP[n])$. We may assume now that $n > t$. 
From the proof of \Cref{thm:t-avoiding-double}, both pairs $(\mF_1,\mF_2)$ and $(\mF_3,\mF_4)$ must satisfy $d(\mF_1,\mF_2)=d(\mF_3,\mF_4)=f(n-1,t)$.
By induction on $n$, we may assume without loss of generality that $\mA_0 \cup D(\mathcal{X}\cup \mA_n^{t+1})= \mP[n-1]$ and $\mB_0 \cup D(\mY)=[n-1]^{(\leq t-1)}$. We will show that $\mA = \mP[n]$ and $\mB = [n]^{(\leq t-1)}$. Since $\varnothing \in \mA_0$ (as $t\geq 1$) and, by the definition of $\mY$, for any element $B \in \mY$, $B \setminus \{n\}$ can be added to $\mB_0$ implying that $\mY$ is empty. 
We then have that $\mB= \mB_0 \cup \mB_n^{*} \cup \mB_n^{t+1}$ and $\mB_0=[n-1]^{(\leq t-1)}$. Similarly we must have that $\mX$ is empty and so $\mA=\mA_0 \cup \mA_n^{*} \cup \mA_n^{t+1}$ and $\mA_0 \cup D(\mA_n^{t+1})=\mP[n-1]$. 
Moreover, we must have that $d(\mF_3,\mF_4)=d(\mA_0,\mB_0\cup D(\mB_n^{t+1}))=f(n-1,t)$ and again by induction, either $\mA_0=\mP[n-1]$ and $\mB_0\cup D(\mB_n^{t+1})=[n-1]^{(\leq t-1)}$ or $\mA_0=[n-1]^{(\leq t-1)}$ and $\mB_0\cup D(\mB_n^{t+1})=\mP[n-1]$. We split our analysis into two parts according to whether the former or latter case holds. 

$(i)$. Suppose the latter case holds. Then any set $A \in [n-1]^{(t)}$ must be of the form $D(A')$ for some $A' \in \mA_n^{t+1}$ and similarly of the form $D(B')$ for some $B'\in \mB_n^{t+1}$. If $n>t+1$ then we reach an immediate contradiction as we can find two elements $A,B \in [n-1]^{(t)}$ with $|A\cap B|=t-1$ which would imply $|(A\cup \{n\}) \cap (B\cup \{n\})|=t$. So suppose that $n=t+1$. Now, neither $\mA_n^{t+1}$ nor $\mB_n^{t+1}$ can be empty. For if $\mA_n^{t+1} = \varnothing$, then $\mP[t] = \mA_0 = [t]^{(\le t-1)}$, which is a contradiction. Similarly, $\mB_n^{t+1} \neq \varnothing$. Then $\mA_n^{t+1}=\mB_n^{t+1}=\{[n]\}$, and so no set $A\in [n]^{(t)}$ can belong to either $\mA$ or $\mB$. It follows that the only sets that can belong to $\mA_n^*$ are of the form $A \cup \{n\}$ for some $A \in \mA_0$ with $|A| \le t-2$ (and similarly for the sets in $\mB_n^*$). Accordingly, $\mA=\mB=[n]^{(\leq t-1)}\cup \{[n]\}$, which is impossible as the number of disjoint pairs is smaller than $f(n,t)$.

$(ii)$. Suppose the former case holds, that $\mA_0=\mP[n-1]$ and $\mB_0\cup D(\mB_n^{t+1})=[n-1]^{(\leq t-1)}$. Since we cannot have an element $A\in \mA_n^{t+1}$ and $D(A)\in A_0$, we must have $A_n^{t+1}=\varnothing$ and, analogously, $B_n^{t+1}=\varnothing$. It follows that $\mA = \mA_n^* \cup \mP[n-1]$ and $\mB = \mB_n^* \cup [n-1]^{(\le t-1)}$. Let us first deal with the case $t = 2$. If $\mB_n^*$ contains no sets of the form $\{i, n\}$, then $\mB_n^* = \{\{n\}\}$ and we are done. Our aim is to show that if $\mB_n^*$ contains a $2$-set, then the number of disjoint pairs is strictly smaller than $f(n, 2)$. So suppose, by way of contradiction, that $\mB_n^*$ contains sets $\{i_1, n\}, \ldots , \{i_l, n\}$ for some $i_1, \ldots, i_l \in [n-1]$. It follows that $\mA_n^*$ can consist of only sets containing $n$ and avoiding $i_1, \ldots , i_l$. Therefore, we may assume $|\mA_n^*| = 2^{n-1-l}$. The number of disjoint pairs between $\mP[n-1]$ and $\mB$ is $2^{n-1} + (n-1)2^{n-2} + 2^{n-1} + l2^{n-2} = 2^n + (n-1+l)2^{n-2}$. The number of disjoint pairs between $\mA_n^*$ and $\mB$ is $(l+1)2^{n-1-l} + (n-1-l)2^{n-2-l}$. Since $f(n, 2) = 2^n + n2^{n-1}$ we have to check that
\begin{equation}\label{eq:inequality}
(n-1+l)2^{n-2} + (l+1)2^{n-1-l} + (n-1-l)2^{n-2-l} < n2^{n-1},
\end{equation}
for $1 \le l \le n-1$. It is easy to check that~(\ref{eq:inequality}) holds for $l = 1, 2$ (bearing in mind that we may assume $n >2$). Further,~(\ref{eq:inequality}) is equivalent to 
$n > \frac{2^l(l-1) + l + 1}{2^l -1}$, which is true since $\frac{2^l(l-1) + l + 1}{2^l -1} \le l$ for $l \ge 3$, and also since $l < n$.
Accordingly, $\mB_n^*$ contains no $2$-sets, and so the proof is complete for $t=2$.

Finally, we see in the proof of \Cref{thm:t-avoiding-double} that in order to have equality, it must hold that $d(D(\mA^*_n), D(\mB^*_n)) = f(n-1, t-1)$. By induction on $n$ and $t$ ($t =2$ being the base case), we have that $D(\mA^*_n) = \mP[n-1]$ and $D(\mB^*_n) = [n-1]^{(\le t-2)}$. So, since $\mA = \mA_n^* \cup \mP[n-1]$ and $\mB = \mB_n^* \cup [n-1]^{(\le t-1)}$, it follows that 
$\mA = \mP[n]$ and $\mB = [n]^{(\le t-1)}$ as required.

\begin{itemize}
\item $t =1$
\end{itemize}
We claim that equality holds only if $\mA = \mP(S), \mB = \mP([n]\setminus S)$ (or vice-versa) for some $S \subseteq [n]$.
This is certainly true for $n=1$. Let $n \geq 2$ and suppose the result holds for smaller values of $n$. 
As before, since both pairs $(\mF_1,\mF_2)$ and $(\mF_3,\mF_4)$ must satisfy $d(\mF_1,\mF_2)=d(\mF_3,\mF_4)=f(n-1,1)$, by induction on $n$, we may assume $\mF_1= \mA_0 \cup D(\mathcal{X}\cup \mA_n^{2})= \mP(W)$ for some $W \subseteq [n-1]$ and $\mF_2=\mB_0 \cup D(\mY)=\mP([n-1]\setminus W)$. Similarly  $\mF_3= \mA_0 \cup D(\mathcal{X})= \mP(W^{\prime})$ and $ \mF_4= \mB_0 \cup D(\mathcal{Y}\cup \mB_n^{2})= \mP([n-1]\setminus W^{\prime})$.
Note that as before we may assume $\mathcal{X}$ and $\mathcal{Y}$ are empty.  

Clearly we have that $W^{\prime}\subseteq W$ and we shall show they actually must be equal. Suppose first that $|W\setminus W^{\prime}|\geq 2$ and let $i_1,i_2$ be two distint elements in $W\setminus W^{\prime}$. By definition, the sets $\{i_1\}, \{i_2\}$ belong to $\mA_0\cup D(\mA^2_{n})$ and to
$\mB_0\cup D(\mB^2_{n})$. But this implies both $\{i_1,n\},\{i_2,n\}$ belong to $\mA^2_{n}$ and to $\mB^2_{n}$, which is a contradiction since we generate a cross-intersection of size $1$. So we may assume that $W\setminus W^{\prime}=\{i\}$, which implies $\{i,n\}$ belongs to $\mA^2_{n}$ and to $\mB^2_{n}$.  Note that both $\mA_n^{*},\mB_n^{*}$ are empty. Indeed, for any element $A \in \mA_n^{*}$ (or $\mB_n^{*}$), the set $A\setminus\{n\}$ belongs to $\mA_0$ (or $\mB_0$) and therefore $A\setminus\{n\}\subseteq W^{\prime}$ (or $ A\setminus\{n\}\subseteq [n-1]\setminus W$). In any case, $A\cap \{i,n\}=\{n\}$, which is impossible. 
We must then have that $\mA= \mP(W^{\prime})\cup (\{i,n\} \vee \mP(W^{\prime}))$ and $\mB=\mP([n-1]\setminus W)\cup (\{i,n\} \vee \mP([n-1] \setminus W))$, for some $W\in \mP([n-1])$ and $i \in [n-1]$ with $W^{\prime}=W\setminus \{i\}$ (as usual, for a set $A$ and a family $\mF$, $A \vee \mF := \{A\cup F: F \in \mF\}$). A simple calculation shows there are exactly $2^{n-2}+2^{n-1}<2^{n}$ disjoint pairs in $(\mA,\mB)$, a contradiction. 
It follows that $W=W^{\prime}$. Hence $\mA_n^{2}$ and $\mB_n^{2}$ must be empty.  Clearly at most one of the sets $\mA_n^{*}, \mB_n^{*}$ can be non-empty, and our result follows.

\section{Disjoint pairs in uniform set systems}\label{sec:dp-uniform}

Our aim in this section is prove Theorems~\ref{thm:disjoint-to-product},~\ref{thm:1-avoiding-uniform-product}, and~\ref{thm:2-avoiding-uniform}. 
We first prove \Cref{thm:disjoint-to-product} which provides a relation between the maximum number of disjoint pairs and the maximum size of the product of two $t$-cross-avoiding $r$-uniform set systems. Recall that, for positive integers $t \le r$ we have defined $d(n, r, t)$ to be the maximum of $d(\mA, \mB)$ over all $t$-cross-avoiding $r$-uniform $(\mA, \mB)$ on the ground set $[n]$. Analogously, we have defined $p(n, r, t)$ to be the maximum of the product $|\mA||\mB|$ over all such pairs of set systems. To these two functions we add a third:
\[
	p^*(n, r, t) := \max \{|\mA||\mB|: (\mA, \mB) \subset [n]^{(r)} \times [n]^{(r)} \text{ is } \{0, \ldots , t-1\}\text{-cross-intersecting} \}.
\]
	
Clearly, $p^*(n, r, t) \le p(n, r, t)$.	
In order to prove \Cref{thm:disjoint-to-product}, we first show that $p(n, r, t) \sim p^*(n, r, t)$ as $n \rightarrow \infty$.
First, let us recall a notion that will be useful in the proof. Let $\mF$ be a family of subsets of $[n]$. A \emph{delta-system} in $\mF$ of size $s$ with core $C$ is a collection 
of sets $F_1, \ldots , F_s \in \mF$ such that for every $i \neq j$, $F_i \cap F_j = \cap_{k=1}^{s} F_k = C$. The following lemma upper bounds $p(n, r, t)$ in terms of $p^*(n, r, t)$. As \Cref{example:avoiding-uniform} shows that $d(n, r, t) = \Omega_{r, t}\left(n^{2r}\right)$, the second term in the upper bound is negligible, so we can establish \Cref{thm:disjoint-to-product} by proving an upper bound of the form $p^*(n, r, t) \le d(n, r, t) + O_{r, t}\left(n^{2r-1}\right)$.

\begin{lem}\label{lem:technical1}
Let $t, r$ be positive integers with $t \le r$. Then
\[
	p(n, r, t) \leq p^*(n, r, t) + C_{r, t}n^{2r-1},
\]
for some constant $C_{r, t}$ depending on $r$ and $t$.
\end{lem}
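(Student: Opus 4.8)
The plan is to start from an optimal $t$-cross-avoiding pair and peel off the pairs responsible for ``large'' cross-intersections, showing they contribute only a lower-order term to the product. Concretely, I would fix a $t$-cross-avoiding pair $(\mA, \mB) \subset [n]^{(r)} \times [n]^{(r)}$ with $|\mA||\mB| = p(n, r, t)$, and call a set $A \in \mA$ \emph{heavy} if there is some $B \in \mB$ with $|A \cap B| \geq t+1$. Write $\mA = \mA_0 \cup \mA_1$, where $\mA_1$ is the collection of heavy sets and $\mA_0 = \mA \setminus \mA_1$. For $A \in \mA_0$ and any $B \in \mB$ we have $|A \cap B| \neq t$ (by $t$-cross-avoidance) and $|A \cap B| \leq t$ (since $A$ is not heavy), so $|A\cap B| \leq t-1$; thus $(\mA_0, \mB)$ is $\{0, \ldots, t-1\}$-cross-intersecting, whence $|\mA_0||\mB| \leq p^*(n, r, t)$. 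Since $|\mA||\mB| = |\mA_0||\mB| + |\mA_1||\mB|$, it remains to bound $|\mA_1||\mB|$, and because $|\mB| \leq \binom{n}{r} = O(n^r)$ it suffices to prove that $|\mA_1| = O(n^{r-1})$.

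The key observation I would use is that heavy sets are governed by the \emph{links} of $t$-element cores. For a $t$-set $C$, recall $\mA(C)$ and $\mB(C)$ denote the sets of $\mA$ and $\mB$ containing $C$. If $A \in \mA(C)$ and $B \in \mB(C)$, then $C \subseteq A \cap B$ forces $|A \cap B| \geq t$, and $t$-cross-avoidance upgrades this to $|A \cap B| \geq t+1$, i.e. $(A \setminus C) \cap (B \setminus C) \neq \varnothing$. In other words, the links $\{A \setminus C : A \in \mA(C)\}$ and $\{B \setminus C : B \in \mB(C)\}$ are cross-intersecting $(r-t)$-uniform families on $[n] \setminus C$. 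Setting $\mathcal{C} = \{C \in [n]^{(t)} : \mA(C) \neq \varnothing \text{ and } \mB(C) \neq \varnothing\}$, I claim $\mA_1 = \bigcup_{C \in \mathcal{C}} \mA(C)$: any heavy $A$ has a witness $B$ with a common $t$-set $C \subseteq A \cap B$, so $A \in \mA(C)$ with $C \in \mathcal{C}$; conversely, if $A \in \mA(C)$ with $C \in \mathcal{C}$, then choosing any $B \in \mB(C)$ the link observation forces $|A \cap B| \geq t+1$, so $A$ is heavy.

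It then remains to bound $\sum_{C \in \mathcal{C}} |\mA(C)|$, and here I would exploit the cross-intersecting links once more to save a factor of $n$. For each $C \in \mathcal{C}$ fix some $B_0 \in \mB(C)$; every $A \in \mA(C)$ satisfies $(A \setminus C) \cap (B_0 \setminus C) \neq \varnothing$, so $A \setminus C$ is one of the at most $(r-t)\binom{n-t-1}{r-t-1}$ many $(r-t)$-subsets of $[n]\setminus C$ meeting the fixed $(r-t)$-set $B_0 \setminus C$. Hence $|\mA(C)| \leq (r-t)\binom{n-t-1}{r-t-1} = O(n^{r-t-1})$. Since $|\mathcal{C}| \leq \binom{n}{t} = O(n^t)$, this gives
\[
	|\mA_1| \;\leq\; \sum_{C \in \mathcal{C}} |\mA(C)| \;\leq\; \binom{n}{t}(r-t)\binom{n-t-1}{r-t-1} \;=\; O(n^{r-1}),
\]
and therefore $|\mA_1||\mB| = O(n^{2r-1})$. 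As the bounding quantity $\binom{n}{t}(r-t)\binom{n-t-1}{r-t-1}\binom{n}{r}$ is a polynomial in $n$ of degree $2r-1$, everything can be absorbed into a single constant $C_{r,t}$ valid for all $n$.

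I expect the only genuine subtlety to be the exact identification $\mA_1 = \bigcup_{C\in\mathcal C}\mA(C)$ together with the cross-intersecting property of links: this is precisely what converts the crude estimate $\binom{n}{t}\binom{n-t}{r-t} = O(n^r)$ into the required $O(n^{r-1})$, the extra factor of $n$ coming from the constraint that each link set must meet a fixed $(r-t)$-set. The degenerate case $t = r$ (where no heavy set exists and $p = p^*$) and the conventional reading of binomial coefficients for small $n$ are handled trivially. It is worth noting that this route avoids the delta-system machinery entirely; a sunflower argument in $\mB$ would also work, but the link computation seems cleaner and delivers the bound for every $n$.
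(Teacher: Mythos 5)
Your proof is correct, but it takes a genuinely different route from the paper's. The paper's argument is two-sided and built on delta-systems: it calls a $t$-set $T$ $\mA$-good if it is the core of a delta-system of size $r-t+1$ in $\mA$, observes that a good core for one family cannot be contained in any set of the other, deletes from \emph{both} families all sets containing a $t$-set that is good for neither, and bounds the deleted parts by noting that every member of $\mA(T)$ must meet the union of the petals of a maximal delta-system with core $T$ (giving $|\mA(T)| \le 2^{(r-t)^2}\binom{n}{r-t-1}$). You instead trim only $\mA$, removing the heavy sets, which makes the accounting an exact identity $|\mA||\mB| = |\mA_0||\mB| + |\mA_1||\mB|$ rather than an inequality with cross terms, and you bound the heavy part by the link observation: for any $t$-set $C$ contained in members of both families, the link of $C$ in $\mA$ must meet the single fixed $(r-t)$-set $B_0 \setminus C$, yielding $|\mA(C)| \le (r-t)\binom{n-t-1}{r-t-1}$ with no sunflower machinery and a cleaner constant. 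Both arguments give the same $O_{r,t}(n^{2r-1})$ error for all $n$. What the paper's heavier setup buys is reusability: the $\mA$-good/$\mB$-good dichotomy and the structural fact that a good point for one family is avoided by the other are invoked again in the proof of Theorem 1.6 (the exact $t=1$ result), where one needs the partition $[n] = X \cup Y \cup Z$; your heavy-set decomposition, being one-sided and witness-based, does not directly deliver that dichotomy, though it is entirely adequate for the lemma as stated.
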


\begin{proof}
Let $(\mA, \mB)$ be a $t$-avoiding pair of $r$-uniform families with $|\mA||\mB| = p(n, r, t)$. We say that a $t$-set $T \subset [n]$ is $\mA$-\emph{good} (resp., $\mB$-\emph{good}) if there exists a delta-system in $\mA$ (resp., $\mB$) of size at least $r-t+1$ with core $T$. Observe that if $T$ is $\mA$-good, then no set in $\mB$ contains $T$ (the symmetric claim holds if $T$ is $\mB$-good). 
Indeed, suppose otherwise that some $B \in \mB$ contains $T$. Let $\Delta \subset \mA$ be the corresponding delta-system 
with core $T$, so that $|\Delta| \geq r-t+1$. Then $B \setminus T$ has size $r-t$ and accordingly there exists $A \in \Delta$ 
such that 
\[
	(A\setminus T) \cap (B\setminus T) = \varnothing.
\]
It follows that $|A\cap B| = t$, a contradiction.

Let $\mT$ be the collection of $t$-sets which are neither $\mA$-good nor $\mB$-good and let 
\[
	\mA_0= \bigcup_{T \in \mT} \mA(T) \text{ \:\: and \:\: } \mB_0 = \bigcup_{T \in \mT} \mB(T).
\]

We claim that the subfamilies $\mA_0$ and $\mB_0$ are small. Indeed, suppose $T \in \mT$. Then any maximum-sized 
delta-system $\Delta \subset \mA$ has size $|\Delta| \leq r-t$. It follows that any set in $\mA(T)$ must non-trivially intersect a set in $\Delta$ outside of $T$. Therefore, it is easy to see that 
\[
	|\mA(T)| \leq 2^{(r-t)^2} \binom{n}{r-t-1}, 
\]
and the same bound holds for $|\mB(T)|$. Accordingly, $|\mA_0|, |\mB_0| \leq c_{r, t} n^{r-1}$ for some constant $c_{r, t}$, 
depending only on $r$ and $t$. Now, let
\[
	\mA' = \mA \setminus \mA_0 \text{ \:\: and \:\:} \mB' = \mB \setminus \mB_0,
\]
and note that the pair $(\mA', \mB')$ is $\{0, \ldots , t-1\}$-intersecting, for if $A' \in \mA'$ and $B' \in \mB'$ intersect 
in $t$ points, then this $t$-set is both $\mA$-good and $\mB$-good, which is impossible. Finally, we see that 
\begin{align*}
p^*(n, r, t) \ge |\mA'||\mB'| &= \left(|\mA| - |\mA_0|\right)\left(|\mB| - |\mB_0|\right) \\
& \ge |\mA||\mB| - O_{r, t}\left(n^{2r-1}\right)\\
&= p(n, r, t) - O_{r, t}\left(n^{2r-1}\right), 
\end{align*}
completing the proof.
\end{proof}

With \Cref{lem:technical1} in mind we can now complete the proof of \Cref{thm:disjoint-to-product}, which asserts that the functions 
$p(n, r, t)$ and $d(n, r, t)$ are essentially equivalent as $n \rightarrow \infty$.

\begin{proof}[ of \Cref{thm:disjoint-to-product}]
First note that $p^*(n, r, t) \le d(n, r, t) + C_{r, t}n^{2r-1}$ for some constant $C_{r, t}$ depending on $r, t$. Indeed, 
if $(\mA, \mB)$ is $\{0, \ldots , t-1\}$-cross-intersecting with $|\mA||\mB| = p^*(n, r, t)$, then we can count 
\[
	|\mA||\mB| = d(\mA, \mB) + \sum \limits_{\substack{A \in \mA, B \in \mB \\ A\cap B \neq \varnothing}}1.
\]
Now, for each element $A \in \mA$ there are at most $2^{t}\binom{n-r}{r-1}$ sets in $[n]^{(r)}$ which have 
non-empty intersection with $A$. Hence, the second summand on the right-hand side is bounded by 
$|\mA|2^t\binom{n-r}{r-1} \le 2^t\binom{n}{r}\binom{n-r}{r-1} \le C_{r, t}n^{2r-1}$.

Now, applying \Cref{lem:technical1} we see that 
\[
	p(n, r, t) \leq d(n, r, t) + c_{r, t}n^{2r-1}, 
\]
for some constant $c_{r, t}$ depending on $r, t$. \Cref{example:avoiding-uniform} shows that $d(n, r, t) = \Omega_{r, t}\left(n^{2r}\right)$, 
and so the result holds as claimed.
\end{proof}

 In the next two subsections 
we shall shift our focus to proving upper bounds for $p(n, r, t)$ in the first two cases $t = 1, 2$. When $t = 1$, the extremal example exhibits some symmetry (in particular, both 
families have the same size). This 
symmetry disappears when $t =2$, indicating that the problem of bounding $p(n, r, t)$ for general $t$ could be quite challenging.

\subsection{Forbidding an intersection of size $1$}

It is very easy to give an upper bound for $p^*(n, r, 1)$, and so, by \Cref{lem:technical1}, this 
translates to an asymptotic upper bound for $p(n, r, 1)$. Indeed, if $\mA, \mB \subset [n]^{(r)}$ are $\{0\}$-cross-intersecting, then rather trivially 
$\left(\bigcup_{A \in \mA}A\right)\cap \left(\bigcup_{B \in \mB} B\right) = \varnothing$, so we may assume that $\mA = X^{(r)}$ and 
$\mB = \left( [n]\setminus X\right)^{(r)}$ for some set $X \subset [n]$. If $|X| = x$, then we have 
\[
	|\mA||\mB| = \binom{x}{r}\binom{n-x}{r}, 
\]
and the right-hand side is maximized when $x = \lfloor{\frac{n}{2}}\rfloor$. Hence,
\[
p(n, r, 1) = (1+o(1))\binom{\lfloor{n/2}\rfloor}{r}\binom{\lceil{n/2}\rceil}{r}.
\]
However, in this case we are able to remove the error term and prove 
an exact upper bound, for $n$ sufficiently large.

\thmAvoidingUniformProduct*

\begin{proof}

Suppose that $(\mA, \mB)$ is $1$-cross-avoiding and maximize $|\mA||\mB|$, and suppose without loss of generality 
that $|\mA| \geq \binom{\lceil{n/2}\rceil}{r}$. As in the proof of \Cref{lem:technical1}, we give a reduction via delta-systems.
More precisely, recall that we say $x \in [n]$ is $\mA$-\emph{good} (resp., $\mB$-\emph{good}) if there exists a delta-system in $\mA$ (resp., $\mB$)
of size at least $r$ with core $\{x\}$. Let $X$ and $Y$ denote the set of $\mA$-good and $\mB$-good points,
respectively, and observe that $A\cap Y = \varnothing$ for every $A \in \mA$ and $B \cap X = \varnothing$ for every 
$B \in \mB$. We therefore obtain a 
partition $[n] = X \cup Y \cup Z$ where $Z$ denotes the set of points which are neither $\mA$-good nor $\mB$-good. We may 
also assume that $X^{(r)} \subset \mA$ and $Y^{(r)} \subset \mB$, since adding any $r$-set contained in $X$ to $\mA$ and adding any $r$-set contained in $Y$ to $\mB$ does not violate the $1$-cross-avoiding property. It also follows from the proof of \Cref{lem:technical1} that, if 
$\mA_0 := \{A \in \mA: A\cap Z \neq \varnothing \}$ and 
$\mB_0 := \{B \in \mB: B\cap Z \neq \varnothing \}$, then $|\mA_0|, |\mB_0| \leq \frac{2^{(r-1)^2}}{(r-2)!}n^{r-1}$. 
Let us write $|\mA| = \binom{x}{r} + |\mA_0|$ and $|\mB| = \binom{y}{r} + |\mB_0|$, where $x := |X|$ and $y := |Y|$, so
\[
	|\mA||\mB| = \binom{x}{r}\binom{y}{r} + |\mA_0|\binom{y}{r} + |\mB_0|\binom{x}{r} + |\mA_0||\mB_0|.
\]

The rest of the proof will be broken into two claims. The first claim asserts that we may assume that the size of $Y$ is large (i.e., linear in $n$). The second claim states that, under the 
assumption that $\mA, \mB$ maximize $|\mA||\mB|$, no point of $[n]$ can be neither $\mA$-good nor $\mB$-good. We therefore obtain the structural information that $\mA = X^{(r)}$ and $\mB = Y^{(r)}$.

\begin{claim}\label{claim:Ybig}
We may assume that $y \geq \beta n$, where $\beta = \beta(r) = \frac{(r!)^{2/r}}{200^{1/r}4r^2}$ (as long as $n$ is sufficiently large).
\end{claim}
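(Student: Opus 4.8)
The plan is to convert the lower bound on the extremal product into a lower bound on $|\mB|$, and then read off a lower bound on $y = |Y|$ from the identity $|\mB| = \binom{y}{r} + |\mB_0|$. First, since the disjoint-support pair $(X^{(r)}, ([n]\setminus X)^{(r)})$ with $|X| = \lfloor n/2\rfloor$ is $1$-cross-avoiding, the maximality of $(\mA, \mB)$ gives
\[
	|\mA||\mB| \ge \binom{\lfloor n/2\rfloor}{r}\binom{\lceil n/2\rceil}{r}.
\]
Bounding $|\mA| \le \binom{n}{r}$ trivially and using $\binom{\lceil n/2\rceil}{r} \ge \binom{\lfloor n/2\rfloor}{r}$, this rearranges to
\[
	|\mB| \ge \frac{\binom{\lfloor n/2\rfloor}{r}^2}{\binom{n}{r}}.
\]
The virtue of having assumed $|\mA| \ge \binom{\lceil n/2\rceil}{r}$ is precisely that $\mB$ is the potentially small family, so this is the case that needs attention.

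Next I would feed in crude binomial estimates. Using $\binom{m}{r} \ge \left(\frac{m-r+1}{r}\right)^r$ on the numerator and $\binom{n}{r} \le \frac{n^r}{r!}$ on the denominator yields $|\mB| \ge \frac{r!\,n^r}{4^r r^{2r}}(1 - o(1))$, where the $o(1)$ collects the fixed-$r$ corrections (chiefly $(1 - 2r/n)$-type factors) as $n \to \infty$. Since $|\mB_0| = O(n^{r-1})$ from the delta-system reduction established in the proof of \Cref{lem:technical1}, we have $\binom{y}{r} = |\mB| - |\mB_0| \ge |\mB| - O(n^{r-1})$, so the lower bound on $|\mB|$ passes to $\binom{y}{r}$ after absorbing the negligible $O(n^{r-1})$ term. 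Finally, applying $\binom{y}{r} \le \frac{y^r}{r!}$ and taking $r$-th roots gives
\[
	y \ge \frac{(r!)^{2/r}}{4r^2}\,n\,(1-o(1)).
\]

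The last step is to absorb the $(1-o(1))$ into the stated constant: because $200^{-1/r}$ is a fixed number strictly below $1$, for all sufficiently large $n$ the surviving factor $(1-o(1))$ exceeds $200^{-1/r}$, whence $y \ge \frac{(r!)^{2/r}}{200^{1/r}4r^2}\,n = \beta n$. The only genuine work is the bookkeeping of lower-order terms: one must verify that the $O(n^{r-1})$ bound on $|\mB_0|$ together with the multiplicative corrections in the binomial estimates (which survive the $r$-th root as an $O(r^2/n)$-type loss) are dominated by the cushion $200^{-1/r}$ once $n$ is large relative to $r$. This comparison is what pins down the threshold $n_0(r)$ and is the main obstacle, though it is entirely routine.
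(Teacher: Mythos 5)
Your argument is correct and rests on exactly the same ingredients as the paper's proof: the lower bound $\binom{\lfloor n/2\rfloor}{r}\binom{\lceil n/2\rceil}{r}$ on the maximal product coming from the disjoint-support example, the bound $|\mB_0| = O_r(n^{r-1})$ from the delta-system reduction, and crude binomial estimates, with the cushion $200^{-1/r}$ absorbing the lower-order corrections. The only cosmetic difference is that you argue directly, lower-bounding $|\mB|$ and hence $y$, whereas the paper argues contrapositively by assuming $y < \beta n$ and showing the product then drops below $\binom{\lfloor n/2\rfloor}{r}\binom{\lceil n/2\rceil}{r}$.
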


\begin{proof}
Put $c_r = \frac{2^{(r-1)^2}}{(r-2)!}$, let $\beta$ be as above, and suppose that 
$y < \beta n$. Using the fact that $|\mA_0|, |\mB_0| \leq c_rn^{r-1}$ and crudely bounding $\binom{x}{r} \leq \binom{n}{r}$, 
we have that
\begin{align*}
|\mA||\mB| &\leq \binom{n}{r}\binom{\beta n}{r} + 2c_rn^{2r-1} + c_r^2n^{2r-2} \\
&\leq \frac{\beta^r}{(r!)^2}n^{2r} + 3c_rn^{2r-1},
\end{align*}
where in the first line we have used the monotonicity of the function $z \mapsto \binom{z}{r}$ (for $z \ge r - 1$) and the inequality 
$\binom{\theta n}{r} \leq \theta^r\binom{n}{r}$, valid for any $ \theta \in (0,1)$ with $\theta n > r$. Assuming that $n \geq 600c_r4^rr^{2r}$ 
we have that $3c_r/n \leq 1/2004^rr^{2r}$, and therefore by our assumption on $\beta$
\[
	|\mA||\mB| < \frac{1}{100}\frac{n^{2r}}{4^rr^{2r}} \leq \frac{1}{100}\binom{n/2}{r}^2 \leq \binom{\lfloor{n/2}\rfloor}{r}\binom{\lceil{n/2}\rceil}{r}, 
\]
completing the proof of \Cref{claim:Ybig}.
\end{proof}

The proof of \Cref{thm:1-avoiding-uniform-product} will be nearly finished once we establish that the set $Z$ of points which are neither $\mA$-good nor $\mB$-good is empty. Our 
second claim asserts just this.

\begin{claim}\label{claim:Zempty}
$Z = \varnothing$.
\end{claim}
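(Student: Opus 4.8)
The plan is to argue by contradiction: suppose $Z \neq \varnothing$ and fix a point $z \in Z$. The strategy is to \emph{move} $z$ into whichever of the two good sets $X$ or $Y$ yields a larger family, show that this operation does not destroy the $1$-cross-avoiding property after a small cleanup, and finally verify that $|\mA||\mB|$ strictly increases, contradicting maximality. Since $z$ is neither $\mA$-good nor $\mB$-good, the subfamilies $\mA(z)$ and $\mB(z)$ of sets through $z$ are small --- of size $O_r(n^{r-2})$ by the delta-system bound used in \Cref{lem:technical1} (any set through $z$ must meet a maximum delta-system with core $\{z\}$, which now has size at most $r-1$, in an extra point). So deleting all sets of $\mA$ and $\mB$ meeting $Z$ costs only a lower-order term, and I may assume at the outset that $\mA = X^{(r)} \cup \mA_0$ and $\mB = Y^{(r)} \cup \mB_0$ with $\mA_0, \mB_0$ negligible.

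The key computation is to compare the gain from enlarging $X$ versus the loss from the cleanup. If I move $z$ into $X$, then $\mA$ can grow by all $r$-sets of the form $\{z\} \cup S$ with $S \in (X)^{(r-1)}$, a gain of $\binom{x}{r-1}$ new sets; this is $\Theta(n^{r-1})$ provided $x$ is linear in $n$. The price is that every $B \in \mB$ must now avoid $z$, so I discard $\mB(z)$, and I must also discard from $\mA$ any set meeting $Y \cup \{z\}$ improperly --- but since the new sets sit inside $X \cup \{z\}$, which is disjoint from $Y$, no new cross-intersection of size $1$ is created among the good sets. The resulting pair is again $1$-cross-avoiding (after removing the negligible $\mA_0, \mB_0$), and the net change in $|\mA|\,|\mB|$ is, to leading order,
\[
\binom{x}{r-1}\binom{y}{r} - O_r\!\left(n^{r-2}\right)\binom{x}{r} > 0,
\]
using \Cref{claim:Ybig} to guarantee $y \geq \beta n$ so that $\binom{y}{r} = \Theta(n^r)$ dominates the $O_r(n^{r-2})$ error. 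One must check that at least one of $X, Y$ is linear in $n$ so that the gain term $\binom{x}{r-1}$ or $\binom{y}{r-1}$ is genuinely of order $n^{r-1}$; this follows because $|\mA| \geq \binom{\lceil n/2 \rceil}{r}$ forces $x$ to be linear, and \Cref{claim:Ybig} handles $y$.

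The main obstacle I anticipate is the bookkeeping around whether moving $z$ into $X$ could inadvertently create a size-$1$ intersection between a newly added set $\{z\}\cup S \subset X \cup \{z\}$ and some leftover set of $\mB_0$ (a set of $\mB$ that meets $Z$). The clean way to handle this is to perform the cleanup \emph{first} --- delete $\mA_0$ and $\mB_0$ entirely, reducing to the pure situation $\mA = X^{(r)}$, $\mB = Y^{(r)}$ --- and only then move $z$; since $X \cup \{z\}$ and $Y$ remain disjoint, the enlarged pair $\bigl((X\cup\{z\})^{(r)}, Y^{(r)}\bigr)$ is automatically $0$-cross-intersecting, hence $1$-cross-avoiding. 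The inequality then reduces to showing that the $\Theta(n^{r-1})$ gain from the extra $r$-sets through $z$ strictly exceeds the total $O_r(n^{2r-2})$ mass of sets removed during cleanup, weighted by the opposite family; this is a routine comparison once both $x$ and $y$ are known to be linear in $n$, and it yields the desired contradiction, forcing $Z = \varnothing$ and hence $\mA = X^{(r)}$, $\mB = Y^{(r)}$ with $|X| + |Y| = n$.
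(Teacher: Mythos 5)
Your core idea is the paper's: a local switching argument that relocates a bad point $z \in Z$ into one of the good sets, trading a loss of $O_r(n^{r-2})$ sets through $z$ against a gain of $\Theta_r(n^{r-1})$ new sets, with \Cref{claim:Ybig} guaranteeing that both $x$ and $y$ are linear in $n$ so that the gain dominates in the product. (The paper moves $z$ into $Y$: it deletes $\mA(z)$ from $\mA$ and adds $\{z\} \vee Y^{(r-1)}$ to $\mB$; your direction is the mirror image and works equally well.) However, the execution you settle on in your last paragraph has a genuine gap. Deleting $\mA_0$ and $\mB_0$ wholesale before switching is not a lower-order operation: $\mA_0 = \bigcup_{z\in Z}\mA(z)$ can have size $\Theta(n^{r-1})$ --- the bound $O_r(n^{r-2})$ applies to a \emph{single} $\mA(z)$, while $|Z|$ may be linear in $n$ --- so the loss in the product from the cleanup is $|\mA_0||\mB| + |\mB_0||\mA| = O(n^{2r-1})$, the same order as the gain $\binom{x}{r-1}\binom{y}{r}$ from adjoining one point, and not the $O_r(n^{2r-2})$ you assert. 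Worse, since deletion can only decrease the product relative to the maximal pair $(\mA, \mB)$, this route cannot produce the strict increase over the \emph{original} pair that a contradiction with maximality requires.

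The repair is to notice that the obstacle you were trying to sidestep is not actually there, so no cleanup is needed. Every $B \in \mB$ avoids every $\mA$-good point, i.e.\ $B \cap X = \varnothing$, so $B \cap (X \cup \{z\}) \subseteq \{z\}$; hence after discarding only $\mB(z)$, every surviving set of $\mB$ --- including the leftover sets of $\mB_0$ --- is \emph{disjoint} from every newly added set $\{z\} \cup S$ with $S \in X^{(r-1)}$, and the pair remains $1$-cross-avoiding with no further deletions. (One should also record that at most $|\mA(z)| = O_r(n^{r-2})$ of the $\binom{x}{r-1}$ candidate sets already lie in $\mA$, so the gain genuinely is $\Theta(n^{r-1})$.) With only $\mB(z)$ removed, the net change in the product is $\binom{x}{r-1}|\mB| - |\mA||\mB(z)| - \binom{x}{r-1}|\mB(z)| = \Theta(n^{2r-1}) - O(n^{2r-2}) > 0$ for $n$ large, which gives the contradiction; this is precisely the computation in the paper's proof with the roles of the two families exchanged.
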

\begin{proof}

Suppose to the contrary that there is some $x \in Z$. 
Form a new pair $(\mA', \mB')$ of $1$-cross-avoiding families in the following way.
First, create $\mA'$ by removing all sets of $\mA$ that contain $x$. We are then free to add to $\mB$ all sets of the form
$B \cup \{x\}$ where $B \subset Y$ is a subset of size $r-1$ (note that as long as $\mA(x) \neq \varnothing$, none of these sets 
originally belonged to $\mB$ 
as otherwise there would be a cross-intersection of size $1$).
It follows that $(\mA', \mB')$ is $1$-cross-avoiding and 
\begin{align*}
|\mA'||\mB'| &= \left(|\mA| - |\mA(x)|\right)\left(|\mB| + \binom{y}{r-1}\right) \\
&= |\mA||\mB| + |\mA|\binom{y}{r-1} - |\mA(x)||\mB| - |\mA(x)|\binom{y}{r-1},\\
\end{align*}
so if $|\mA| > \left(|\mB|\binom{y}{r-1}^{-1} + 1\right) |\mA(x)|$, then we reach a contradiction to the 
maximality of $|\mA||\mB|$. But by \Cref{claim:Ybig} we have $\binom{y}{r-1} \geq \frac{\beta^{r-1}}{(r-1)^{r-1}}n^{r-1}$
and so the right-hand side is at most 
\begin{equation}
	c_rn^{r-2}\left(1+ \frac{c_r(r-1)^{r-1}}{\beta^{r-1}r!}n\right) \leq \frac{2c_r^2(r-1)^{r-1}}{\beta^{r-1}r!}n^{r-1} \label{eqn:eq1}
\end{equation}

Now, as long as $n > \frac{2^{r+1}c_r^2r^r(r-1)^{r-1}}{\beta^{r-1}r!}$, the right-hand side of (\ref{eqn:eq1}) is strictly less than 
\[
	\frac{n^r}{2^rr^r} \leq \binom{n/2}{r} \leq \binom{\lceil{n/2}\rceil}{r} \leq |\mA|, 
\]
and the proof of \Cref{claim:Zempty} is complete.
\end{proof}

Since $Z = \varnothing$ it follows that $\mA = X^{(r)}$ and $\mB = Y^{(r)}$. Accordingly, $|\mA||\mB| = 
\binom{x}{r}\binom{n-x}{r}$, which is maximized when $x = \lfloor{\frac{n}{2}}\rfloor$. 
\Cref{thm:1-avoiding-uniform-product} therefore holds with $n_0(r) = \max \{ 600c_r4^rr^{2r} , \frac{2^{r+1}c_r^2r^r(r-1)^{r-1}}{\beta^{r-1}r!} \} $.
\end{proof}

\subsection{Forbidding an intersection of size $2$}

The extremal example showing that \Cref{thm:1-avoiding-uniform-product} is tight is symmetric in the sense that both families in the pair have the same size. We shall see now that this kind of symmetry is lost when forbidding a cross-intersection of size $2$. However, in view of our reduction via \Cref{lem:technical1}, \Cref{thm:2-avoiding-uniform} will follow quite easily from a result of Huang, Linial, Naves, Peled 
and Sudakov~\cite{HuangLinialNavesPeledSudakov}, and independently in a weaker form by Frankl, Kato, Katona and Tokushige~\cite{FranklKatoKatonaTokushige}. In order to state this result we need to introduce some notation. Following the first set of authors, for a $k$-vertex graph $H$ and an $n$-vertex graph $G$ 
let $\text{Ind}(H; G)$ denote the collection of induced copies of $H$ in $G$. The \emph{induced} $H$-\emph{density} in $G$ is defined as 
\[
	d(H; G) = \frac{|\text{Ind}(H;G)|}{\binom{n}{k}}.
\]

\begin{thm}\label{thm:densities}
Let $r, s \ge 2$ be integers and suppose that $d\left(\overline{K}_r; G\right) \ge p$ where 
$G$ is an $n$-vertex graph and $0\le p \le 1$. Let $q$ be the unique root of $q^r + rq^{r-1}(1-q) = p$ 
in $[0, 1]$. Then $d\left(K_s; G\right) \le M_{r, s, p} + o(1)$, where
\[
	M_{r, s, p} := \max\{ (1-p^{1/r})^s + sp^{1/r}(1-p^{1/r})^{s-1}, (1-q)^s\}.
\]

\end{thm}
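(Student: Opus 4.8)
The plan is to pass to the graph-limit (graphon) setting, where the $o(1)$ disappears and the assertion becomes the exact extremal problem of maximising the induced $K_s$-density subject to a lower bound on the induced $\overline{K}_r$-density, and then to show that the extremiser is a two-block threshold graph realising one of the two terms defining $M_{r,s,p}$. Writing $W\colon[0,1]^2\to[0,1]$ for a graphon, the induced densities become $d(\overline{K}_r;W)=\int_{[0,1]^r}\prod_{i<j}\bigl(1-W(x_i,x_j)\bigr)\,dx$ and $d(K_s;W)=\int_{[0,1]^s}\prod_{i<j}W(x_i,x_j)\,dx$, and by compactness of the space of graphons the maximum of $d(K_s;W)$ over all $W$ with $d(\overline{K}_r;W)\ge p$ is attained; the theorem is equivalent to this maximum being $M_{r,s,p}$.

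My first reduction would be to show that an optimal $W$ may be taken $\{0,1\}$-valued. The key point is that both densities are \emph{multilinear} in the edge weights: perturbing $W$ on a small symmetric off-diagonal rectangle changes each of $d(K_s;W)$ and $d(\overline{K}_r;W)$ affinely in the perturbation parameter. Hence one can always move such a value to an endpoint of $[0,1]$ without decreasing the objective while keeping the constraint satisfied, and a standard exchange/martingale argument upgrades this to an almost-everywhere $\{0,1\}$-valued optimiser, i.e.\ a genuine graph limit.

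The heart of the argument, and the step I expect to be the main obstacle, is to show that an optimal $\{0,1\}$-valued graphon may be taken to be a \emph{threshold} graphon, that is, one whose vertex neighbourhoods are nested. I would attempt this through a compression (symmetrisation) argument: if a positive-measure family of vertex pairs have crossing neighbourhoods, I would perform a local rewiring making them nested while not decreasing $d(K_s;\cdot)$ and keeping $d(\overline{K}_r;\cdot)\ge p$. The delicate issue is that any single such move affects the clique density and the independent-set density at once, so one must orient the compression so that both quantities move favourably, or set up a two-parameter move that trades them off along the constraint surface. This is precisely where no one-line argument is available, and where most of the work lies; an alternative route worth trying here is a Zykov-type symmetrisation (or a flag-algebra/finite-forcibility analysis of the extreme points) to force the nested structure.

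Once the problem is reduced to threshold graphons it becomes a clean one-dimensional variational problem. A threshold graphon is described by a creation sequence $\sigma\colon[0,1]\to\{0,1\}$ with $x\sim y$ iff $\sigma(\max(x,y))=1$; writing $m(u)=\int_u^1\sigma$ one computes $d(K_s;W)=s\int_0^1 m(u)^{s-1}\,du$ and $d(\overline{K}_r;W)=r\int_0^1\bigl((1-u)-m(u)\bigr)^{r-1}\,du$, since an ordered $s$-tuple spans a clique exactly when all but its smallest vertex are dominating, and dually for independent sets. Placing all dominating mass at the top, $\sigma=\mathbf{1}_{[1-\beta,1]}$, maximises $m(u)$ pointwise and so maximises $d(K_s;\cdot)$ for fixed $\beta$; it yields the complete split graph with independent part of measure $p^{1/r}$ and value $(1-p^{1/r})^s+s\,p^{1/r}(1-p^{1/r})^{s-1}$. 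Placing it at the bottom, $\sigma=\mathbf{1}_{[0,\beta]}$, yields the disjoint union of a clique and an independent set, whose constraint reads $q^r+rq^{r-1}(1-q)=p$ with $q=1-\beta$ and whose clique density is $(1-q)^s$. Since $m\mapsto s\int_0^1 m^{s-1}$ is convex for $s\ge 2$, a convexity-plus-rearrangement argument then shows the maximum is attained at one of these two monotone creation sequences; comparing the two explicit values gives $M_{r,s,p}$, and the only remaining work is the routine calculus of verifying that no intermediate profile competes.
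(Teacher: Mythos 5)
First, a point of reference: the paper does not prove this statement at all --- it is imported verbatim from Huang, Linial, Naves, Peled and Sudakov~\cite{HuangLinialNavesPeledSudakov} (with a weaker form in Frankl--Kato--Katona--Tokushige), so there is no in-paper proof to compare against. Your outline in fact tracks the route of that reference (a shifting/compression reduction to graphs with nested neighbourhoods, followed by an analytic optimisation), and your identification of the two extremal configurations --- the complete split graph and the disjoint union of a clique and an independent set --- together with the threshold-graphon density formulas $s\int_0^1 m(u)^{s-1}\,du$ and $r\int_0^1\bigl((1-u)-m(u)\bigr)^{r-1}\,du$, is correct.

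As a proof, however, the proposal has gaps at exactly the two places where the content of the theorem lies. (1) The reduction to threshold structure is not carried out: you explicitly flag it as ``where most of the work lies'' and only list candidate techniques, which makes this a plan rather than an argument. (For what it is worth, the worry you raise is resolvable: the uncrossing move $N(u)\mapsto N(u)\cup N(v)$, $N(v)\mapsto N(u)\cap N(v)$ does not decrease the number of induced cliques of any size \emph{nor} the number of independent sets of any size, by supermodularity of $S\mapsto\#\{\text{cliques in }G[S]\}$, so both the objective and the constraint move favourably --- but this, together with termination and the passage to graphons, must be proved, not hoped for.) (2) The final one-dimensional problem is not ``routine calculus,'' and your convexity/extreme-point argument fails as stated: the objective is convex in the creation profile $\sigma$, but the feasible region $\{\sigma: r\int_0^1((1-u)-m_\sigma(u))^{r-1}\,du\ge p\}$ is a \emph{superlevel} set of a convex functional and hence not convex, so ``a convex function on a convex set is maximised at an extreme point'' does not apply. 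Concretely, for total mass $\beta$ strictly between $1-p^{1/r}$ and $1-q$ the all-at-the-top profile is infeasible while the all-at-the-bottom profile is not the pointwise maximiser of $m$, so intermediate profiles must be genuinely excluded; this is precisely the ``analytical arguments'' portion of the cited paper and it is not short. Until both steps are supplied, the proposal is an accurate roadmap of the known proof, not a proof.
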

After these preparations, \Cref{thm:2-avoiding-uniform} is easily proved.
\begin{proof}[ of \Cref{thm:2-avoiding-uniform}]
Let $(\mA, \mB)$ be a pair of $r$-uniform families. By \Cref{lem:technical1}, we may assume that $(\mA, \mB)$ is 
$\{0, 1\}$-cross-intersecting. Thus, the pair $(\mA, \mB)$ gives rise to a red-blue colouring of the edges of $K_n$ such that 
every $r$-set in $\mA$ induces a red copy of $K_r$, and every $r$-set in $\mB$ induces a blue copy of $K_r$. We may assume 
that $|\mA| = \alpha^r\binom{n}{r}$ for some $\alpha \in (0, 1)$. Then in \Cref{thm:densities} we may take $G = K_n$, $r = s$, and 
$p = \alpha^r$. It follows that 
\[
	|\mB| \le (M_{r, r, \alpha^r} + o(1))\binom{n}{r},
\]
and hence
\[
	|\mA||\mB| \leq (\gamma_r + o(1))\binom{n}{r}^2,
\]
where $\gamma_r = \max_{\alpha \in [0, 1]}\{\alpha^r M_{r, r, \alpha^r}\} = \max_{\alpha \in [0, 1]}\{\alpha^r(1-\alpha)^r + r\alpha^{r+1}(1-\alpha)^{r-1}\}$.
\end{proof}

Accordingly, from \Cref{thm:2-avoiding-uniform} we get that $p^*(n, r, 2) \le p(n, r, 2) \leq (\gamma_r + o(1))\binom{n}{r}^2$ as $n \rightarrow \infty$, and hence the same is true for $d(n, r, 2)$ by \Cref{thm:disjoint-to-product}. This bound is asymptotically tight for these problems by considering the pair $\left( \mF_{X, 1}, \mF_{X^c, 0}\right)$ where $|X| = \alpha n$, and $\alpha \in [0, 1]$ yields the maximum value of $\gamma_r$, as above.

\section{Final Remarks and Open Problems}\label{sec:final}

We have addressed a variety of problems concerning the maximum number of disjoint pairs in set systems with certain intersection 
conditions. Many problems remain open. For example, \Cref{thm:t-avoiding-single} shows that the family $\mF^*(n, t)$ (see \Cref{sec:intro}) that maximizes the size of $t$-avoiding set systems for $n$ sufficiently large also is asymptotically optimal for maximizing the number of disjoint pairs. 
We conjecture that $\mF^*(n, t)$ indeed maximizes the number of disjoint pairs among all $t$-avoiding set systems, for $n$ sufficiently large.
\begin{conj}\label{conj:conjecture1}
For every integer $t \ge 1$ there exists an integer $n_0 = n_0(t)$ such that the following holds.
If $n \ge n_0$ and $\mF \subset \mP[n]$ is $t$-avoiding, then 
\[
	d(\mF) \leq d\left(\mF^*(n, t)\right).
\]

\end{conj}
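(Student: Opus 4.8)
The plan is to prove \Cref{conj:conjecture1} by combining the element-deletion machinery behind \Cref{thm:t-avoiding-double} with a stability analysis that extracts the precise second-order term. First I would record the exact shape of the conjectured optimum. Writing $f(n,t)=\sum_{k=0}^{t-1}\binom{n}{k}2^{n-k}$, the disjoint pairs of $\mF^*(n,t)$ are, up to a negligible $O(n^{2t-2})$ contribution from pairs of sets of size at most $t-1$, exactly the pairs consisting of one set of size at most $t-1$ and one set of the Katona top-half $\mF(n,t)$; the computation after \Cref{thm:t-avoiding-single} then gives $d(\mF^*(n,t)) = \bigl(1-\Theta(n^{-1/2})\bigr)\tfrac12 f(n,t)$. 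Hence the benchmark we must hit lies strictly below the first-order value $\tfrac12 f(n,t)$, and the entire content of the conjecture is to close this $\Theta(n^{-1/2})$ multiplicative gap and pin down the extremizer.

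Next I would set up the single-element recursion, mirroring the proof of \Cref{thm:t-avoiding-double} but now on the diagonal. Fix the ground element $n$, let $\mF_0=\{F\in\mF: n\notin F\}$, $\mF_n=\{F\in\mF: n\in F\}$, and form the link $\mathcal{L}=\{F\setminus\{n\}: F\in\mF_n\}\subseteq\mP[n-1]$. Since two sets containing $n$ are never disjoint, one gets the clean identity $d(\mF)=d(\mF_0)+d(\mF_0,\mathcal{L})$, where $\mF_0$ is $t$-avoiding, $\mathcal{L}$ is $(t-1)$-avoiding, and $(\mF_0,\mathcal{L})$ is $t$-cross-avoiding in $\mP[n-1]$. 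The difficulty is that bounding the two terms separately — $d(\mF_0)$ by the single-family optimum for $n-1$ and $d(\mF_0,\mathcal{L})$ by $f(n-1,t)$ via \Cref{thm:t-avoiding-double} — loses exactly the stray factor of $2$, because the cross term is maximized by $\mF_0$ \emph{small} and $\mathcal{L}$ \emph{large} while the self term wants the recursive $\mF^*$ shape, and these two optima are incompatible. The crux is therefore a \emph{joint} bound on $d(\mF_0)+d(\mF_0,\mathcal{L})$, which I would obtain from a quantitative stability companion to \Cref{thm:t-avoiding-double}: for a $t$-cross-avoiding pair, $d(\mA,\mB)$ drops below $f(n,t)$ by an amount controlled by how far $(\mA,\mB)$ is from the extremal pair $([n]^{(\le t-1)},\mP[n])$. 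Applied to the near-diagonal configuration above, stability forces the penalty down to precisely $d(\mF^*)$ once one observes that a single family cannot simultaneously resemble both $[n]^{(\le t-1)}$ and $\mP[n]$.

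To turn this into an exact result for $n\ge n_0(t)$, I would first reduce to families with no set of size exactly $t$: such a set forbids every one of its $2^{n-t}$ proper supersets from lying in $\mF$, which can be shown to be a net loss for $n$ large, so an optimal family may be taken $t$-set-free and then $(\mF,\mF)$ is genuinely $t$-cross-avoiding. I would then extract robust structure using the delta-system (kernel) method already employed in \Cref{lem:technical1}, since ordinary shifting does not preserve the forbidden-intersection condition and so cannot be used directly. Finally, local exchange moves push $\mF$ to the canonical shape of small sets together with a half-space of large sets, and the exact threshold — and hence the parity-dependent definition of $\mF(n,t)$ — is fixed by invoking Katona's theorem and the Frankl–Füredi theorem, after verifying that the $O(n^{2t-2})$ small–small contributions cannot shift the optimum.

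I expect the main obstacle to be precisely the quantitative stability of \Cref{thm:t-avoiding-double}. The inductive argument given in \Cref{sec:dp-nonuniform} delivers only the first-order bound and its exact extremizers, whereas here one must track the lower-order $\Theta(n^{-1/2})$ deficit and the approximate structure of \emph{near}-extremal pairs through the recursion without the inequalities accumulating error. This is compounded by the boundary layer of sets of size near $(n+t)/2$, where the extremal family changes with the parity of $n+t$ and where a careless exchange move can either create or destroy an intersection of size exactly $t$; controlling this layer, rather than the bulk, is where the delicate part of the argument will lie.
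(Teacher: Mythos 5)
This statement is \Cref{conj:conjecture1}, which the paper explicitly leaves open: there is no proof in the paper to compare against, and the strongest result actually established is the asymptotic bound $d(\mF)\le\frac12(f(n,t)-1)$ of \Cref{thm:t-avoiding-single}. Your proposal does not close that gap. The decomposition $d(\mF)=d(\mF_0)+d(\mF_0,\mathcal{L})$ is correct, as is your diagnosis that bounding the two terms separately via \Cref{thm:t-avoiding-double} loses a constant factor because the two optima are incompatible. But the step you offer to repair this --- a ``quantitative stability companion'' to \Cref{thm:t-avoiding-double} strong enough that, applied on the diagonal, it ``forces the penalty down to precisely $d(\mF^*)$'' --- is not a lemma you state or prove; it is essentially a restatement of the conjecture. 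A stability theorem of the usual kind would say that near-extremal cross-avoiding pairs are structurally close to $([n]^{(\le t-1)},\mP[n])$; it would not by itself produce the exact second-order deficit $\Theta(n^{-1/2})\cdot\frac12 f(n,t)$, which comes from the $\Theta(t)$ missing central binomial layers in $\mF(n,t)$. Since $d(\mF^*(n,t))$ sits \emph{below} $\frac12 f(n,t)$ by exactly this amount, any argument that proves the conjecture must control the boundary layer of sets of size near $(n+t)/2$ exactly; you correctly identify this as the hard part but supply no mechanism for it, and the recursion you set up would have to propagate such exact information through $n$ levels without the errors accumulating.

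The auxiliary reductions are also asserted rather than proved, and at least one is genuinely delicate. A set $S$ of size exactly $t$ excludes its $2^{n-t}-1$ proper supersets from $\mF$, but $S$ itself can participate in up to $|\mF\cap\mP([n]\setminus S)|$ disjoint pairs, which can also be of order $2^{n-t}$; so ``removing $S$ is a net loss'' requires a real comparison, not just a count of forbidden supersets. Likewise, the ``local exchange moves'' pushing $\mF$ to the canonical shape are unspecified, and as you note yourself, naive compressions do not preserve the $t$-avoiding condition. In short, the proposal is a reasonable research plan that correctly locates the difficulty, but each of its load-bearing steps (the stability lemma, the $t$-set removal, the exchange argument) is an unproved claim, and the central one is of the same depth as \Cref{conj:conjecture1} itself.
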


We further introduced the three functions
$d(n, r, t)$, $p(n, r, t)$, and $p^*(n, r, t)$, each of which turned out to be asymptotically equivalent to each other (see \Cref{sec:dp-uniform}). We made progress in determining $p(n, r, 1)$ (and hence also $d(n, r, 1)$ and $p^*(n, r, 1)$) for $n$ large, and also $p(n, r, 2)$, asymptotically.
The extremal constructions for all three of these problems turned out to be of the form $\left(\mF_{X, a}, \mF_{X^c, b}\right)$, for suitable 
$X \subset [n]$ and nonnegative integers $a, b$, as equal as possible. We conjecture that this phenomenon persists for all $t < r$.

\begin{conj}\label{conj:conjecture2}
Let $r$ and $t$ be positive integers with $t < r$. Then there exist nonnegative integers $a,b$ and $X\subset [n]$ such that 
\[
	p(n,r,t)=(1+o(1))|\mF_{X, a}||\mF_{X^{c}, b}|.
\]
\end{conj}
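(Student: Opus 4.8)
The plan is to follow the strategy that succeeded for $t=2$ and reduce the conjecture to an extremal density question about colourings of $t$-subsets. First, by \Cref{lem:technical1} together with \Cref{thm:disjoint-to-product}, it suffices to estimate $p^*(n,r,t)$, so I may assume that $(\mA,\mB)$ is $\{0,\dots,t-1\}$-cross-intersecting. The crucial reformulation is to colour each $t$-set $T \in [n]^{(t)}$ \emph{red} if $T \subseteq A$ for some $A \in \mA$ and \emph{blue} if $T \subseteq B$ for some $B \in \mB$. The condition $|A\cap B| \le t-1$ guarantees that no $t$-set receives both colours, so the red and blue classes are disjoint. Every $A \in \mA$ then has all of its $\binom{r}{t}$ sub-$t$-sets red, and every $B \in \mB$ has all of its sub-$t$-sets blue. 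Consequently $|\mA|$ is bounded by the number of $r$-sets all of whose $t$-subsets are red, and $|\mB|$ by the number all of whose $t$-subsets are blue, and the task becomes: over all $2$-colourings of $[n]^{(t)}$ with disjoint colour classes, maximise the product of the number of ``all-red'' $r$-sets and the number of ``all-blue'' $r$-sets.

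Next I would pass to the normalised limit $\gamma_{r,t} = \lim_n p^*(n,r,t)/\binom{n}{r}^2$. The lower bound $\gamma_{r,t} \ge \max |\mF_{X,a}||\mF_{X^c,b}|/\binom{n}{r}^2$ is exactly \Cref{example:avoiding-uniform}; writing $|X| = \alpha n$ and letting $n \to \infty$, the product families evaluate to
\[
\gamma_{r,t} = \max_{\substack{\alpha \in [0,1] \\ a+b \le t-1}}\Bigl(\textstyle\sum_{j=0}^{a} \binom{r}{j}\alpha^{r-j}(1-\alpha)^{j}\Bigr)\Bigl(\textstyle\sum_{i=0}^{b} \binom{r}{i}\alpha^{i}(1-\alpha)^{r-i}\Bigr).
\]
A convexity calculation should confirm that this maximum is attained with $a,b$ as equal as possible, matching the intuition recorded after \Cref{example:avoiding-uniform} and the extremisers in \Cref{thm:1-avoiding-uniform-product} and \Cref{thm:2-avoiding-uniform}. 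The entire content of the conjecture is therefore the matching upper bound $p^*(n,r,t) \le (\gamma_{r,t}+o(1))\binom{n}{r}^2$.

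For that upper bound the natural route is to prove the $t$-uniform analogue of \Cref{thm:densities}: for any $2$-colouring of $[n]^{(t)}$ with disjoint colour classes, the product of the ``all-red'' and ``all-blue'' $r$-clique densities is at most $\gamma_{r,t}+o(1)$, with the product colourings --- in which $T$ is coloured according to whether $|T \setminus X|$ lies below a threshold for a fixed part $X$ --- asymptotically extremal. When $t=2$ this is precisely the graph statement of \cite{HuangLinialNavesPeledSudakov} used in the proof of \Cref{thm:2-avoiding-uniform}. For general $t$ I would attempt this either through flag algebras, to certify the bound for fixed small $r,t$, or through a stability/symmetrisation argument: starting from a near-extremal colouring, one performs local switches to show that the colour of a $t$-set can be made to depend only on its intersection pattern with a bounded partition of $[n]$, thereby forcing the product structure. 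The families $\mF_{X,a}$ correspond exactly to the colouring in which $T$ is red iff $|T\setminus X|\le a$, so proving that the extremiser is such a bounded ``junta'' would close the gap.

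The main obstacle is precisely this last step. Unlike the graph case $t=2$, there is no available density theorem bounding clique counts in $t$-uniform hypergraphs, and density problems of this type are notoriously hard, sitting in the regime where flag algebras and hypergraph containers yield only partial information. In effect, what is needed is a stability theorem asserting that near-optimal $\{0,\dots,t-1\}$-cross-intersecting pairs must be close to product families $(\mF_{X,a},\mF_{X^c,b})$; establishing such structure for $t \ge 3$ will require genuinely new ideas beyond the reduction and the single density input that sufficed for $t \le 2$.
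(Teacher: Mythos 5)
This statement is \Cref{conj:conjecture2}, which the paper leaves \emph{open}: there is no proof of it anywhere in the text, only the reductions and heuristics recorded in \Cref{sec:final}. Your proposal does not prove it either, and to your credit you say so explicitly in your final paragraph. What you have written is a correct and well-organised research plan, not a proof, so the honest verdict is that there is a genuine gap --- indeed, the central gap of the problem.

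Concretely: the first two stages of your plan are sound and coincide with what the paper itself suggests. The passage from $p(n,r,t)$ to $p^*(n,r,t)$ via \Cref{lem:technical1} (and back to $d(n,r,t)$ via \Cref{thm:disjoint-to-product}) is exactly the reduction the authors point out after the conjecture statement, and your reformulation as a two-colouring of $[n]^{(t)}$ with disjoint colour classes --- where $|\mA|$ is dominated by the count of all-red $r$-sets and $|\mB|$ by the count of all-blue $r$-sets --- is the $t$-uniform generalisation of the encoding used in the proof of \Cref{thm:2-avoiding-uniform}. The lower bound via \Cref{example:avoiding-uniform} is routine. But the entire content of the conjecture is the matching upper bound, and for that you invoke ``the $t$-uniform analogue of \Cref{thm:densities},'' which does not exist: the Huang--Linial--Naves--Peled--Sudakov theorem is a genuinely graph-theoretic result, and no analogous bound on products of monochromatic clique densities in colourings of $[n]^{(t)}$ is known for $t \ge 3$. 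Your suggested routes (flag algebras for small cases, or a symmetrisation/junta-stability argument) are plausible directions but are not carried out, and each would constitute a substantial new theorem. Two smaller points you would also need to address in a complete write-up: the existence of the limit $\gamma_{r,t} = \lim_n p^*(n,r,t)/\binom{n}{r}^2$ requires an argument (e.g.\ an averaging/subsequence extraction over random restrictions), and the colouring relaxation is one-directional (not every all-red $r$-set lies in $\mA$), which is harmless for an upper bound but means extremality for the colouring problem must still be transferred back to extremality for $(\mA,\mB)$. In short, your proposal reproduces the paper's own suggested attack on \Cref{conj:conjecture2} and correctly isolates the missing ingredient, but the conjecture remains unproved.
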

By \Cref{thm:disjoint-to-product}, \Cref{conj:conjecture2} would imply that $d(n, r, t) = (1+ o(1))d\left(\mF_{X, a}, \mF_{X^{c}, b}\right)$. Note that by \Cref{lem:technical1} we may pass from a $t$-cross-avoiding pair to a $\{0, \ldots , t-1\}$-cross-intersecting pair of set systems when attempting to prove 
\Cref{conj:conjecture2}.
When $t=2k+1$ is odd, a simple calculation shows that the product $|\mF_{X, a}||\mF_{X^{c}, b}|$ is maximized when $|X|=\lfloor n/2\rfloor$ and $a=b=k$. Hence, we expect the extremal construction to exhibit some symmetry when $t$ is odd. On the other hand, when $t$ is even we expect the extremal construction to be asymmetric, as evidenced by the optimal configuration in \Cref{thm:2-avoiding-uniform}. 
Note that in order to deal with this asymmetry, we relied on a result of Huang, Linial, Naves, Peled 
and Sudakov~\cite{HuangLinialNavesPeledSudakov}, concerning densities of red and blue cliques in $2$-edge-colourings of the complete graph. 
Rather vaguely, one way of tackling \Cref{conj:conjecture2} might be to give a suitable hypergraph generalization of their result.

Let us close the paper by mentioning a connection to isoperimetric problems. We believe that the pairs $\left(\mF_{X, a}, \mF_{X^c, b}\right)$ with $a + b \le t -1$ as equal as possible should be 
optimal for maximizing $p^*(n, r, t)$. For simplicity, let us specialize to the case when $r = 3$ and $t = 2$ (this case has a pleasant interpretation as the maximum product of monochromatic triangles in a $2$-edge-colouring of $K_n$). Thus, if $(\mA, \mB)$ is a pair of $3$-uniform $\{0, 1\}$-intersecting hypergraphs and $n$ is sufficiently large, we believe that the exact bound $|\mA||\mB| \le \gamma \binom{n}{3}^2$ should hold, where
$\gamma = \gamma_3 = \max_{\alpha \in [0, 1]}\{\alpha^3(1-\alpha)^3 + 3\alpha^4(1-\alpha)^2\}$. One way of establishing this might be to prove a lower bound on the \emph{lower-upper shadow}. Recall that the \emph{lower shadow} of a set system $\mF \subset [n]^{(r)}$, denoted $\p \mF$, is the set $\{A \in [n]^{(r-1)}: A \subset F, \text{ for some } F \in \mF\}$. The upper shadow is defined similarly, and denoted $\p^+\mF$.

\begin{qn}\label{qn:question}
Suppose that $\mA \subset [n]^{(3)}$ with $|\mA| = \binom{x}{3}$ for some real number $x \ge 3$. Is it true that 
\[
	|\p^+\p \mA| \ge \binom{x}{3} + \binom{x}{2}(n-x)?
\]

\end{qn} 

Let $(\mA, \mB)$ be a pair of $\{0, 1\}$-intersecting $3$-uniform set systems and write $|\mA| = \binom{x}{3}$ for some real $x \ge 3$. If \Cref{qn:question} is true, then, since $\mB \subset (\p^+\p \mA)^c$, we have that $|\mB| \le \binom{n-x}{3} + \binom{n-x}{2}x$, and hence $|\mA||\mB| \le \binom{x}{3}\left(\binom{n-x}{3} + \binom{n-x}{2}x\right)$. Setting $x = \alpha n$ and optimizing yields $|\mA||\mB| \le \gamma \binom{n}{3}^2$. Note that \Cref{qn:question} is related to several stronger conjectures made by Bollob{\'a}s and Leader~\cite{BollobasLeaderIso} concerning `mixed' shadows.

\section{Acknowledgements}
Part of this research was carried out at IMT Institute for Advanced Studies, Lucca. We are grateful for their generous hospitality. We would like to thank Andrew Thomason for bringing to our attention the paper of Huang et al., and Bela Bollob{\'a}s, for helpful comments. We thank the referees for many helpful suggestions that improved the presentation of this paper.

    \bibliography{disjointpairs_int}

\providecommand{\bysame}{\leavevmode\hbox to3em{\hrulefill}\thinspace}
\providecommand{\MR}{\relax\ifhmode\unskip\space\fi MR }
\providecommand{\MRhref}[2]{%
  \href{http://www.ams.org/mathscinet-getitem?mr=#1}{#2}
}
\providecommand{\href}[2]{#2}
\begin{thebibliography}{10}

\bibitem{Ahlswedemindisjoint}
R.~Ahlswede, \emph{Simple hypergraphs with maximal number of adjacent pairs of
  edges}, J. Combinatorial Theory Series B. \textbf{28} (1980), 164--167.

\bibitem{AK}
R.~Ahlswede and L.H. Khachatrian, \emph{The complete intersection theorem for
  systems of finite sets}, European J. Combin. \textbf{18} (1997), 125--136.

\bibitem{FranklAlon}
N.~Alon and P.~Frankl, \emph{The maximum number of disjoint pairs in a family
  of subsets}, Graphs and Combinatorics \textbf{1} (1985), 13--21.

\bibitem{alon-lubetzky}
N.~Alon and E.~Lubetzky, \emph{Uniformly cross intersecting families},
  Combinatorica \textbf{29} (2009), 389--431.

\bibitem{BollobasLeader}
B.~Bollob\'as and I.~Leader, \emph{Set systems with few disjoint pairs},
  Combinatorica \textbf{23} (2003), 559--570.

\bibitem{BollobasLeaderIso}
\bysame, \emph{Isoperimetric problems for $r$-sets}, Combinatorics, Probability
  and Computing \textbf{13} (2004), 277--279.

\bibitem{SudakovGanDas}
S.~Das, W.~Gan, and B.~Sudakov, \emph{The minimum number of disjoint pairs in
  set systems and related problems}, Combinatorica \textbf{36} (2016),
  623--660.

\bibitem{ellis-keller-lifshitz}
D.~Ellis, N.~Keller, and N.~Lifshitz, \emph{Stability for the {C}omplete
  {I}ntersection {T}heorem, and the {F}orbidden {I}ntersection {P}roblem of
  {E}rd{\H{o}}s and {S}{\'o}s}, arXiv:1604.06135.

\bibitem{EKR}
P.~Erd\H{o}s, C.~Ko, and R.~Rado, \emph{Intersection theorems for systems of
  finite sets}, Quart. J. Math. Oxford Ser. \textbf{12} (1961), 313--320.

\bibitem{erdos-proc}
P.~Erd{\H{o}}s, \emph{Problems and results in graph theory and combinatorial
  analysis}, Proceedings, Fifth British Combinatorial Conference (Abderdeen,
  1975), Congressus Numerantium, 15, pp.~169--192.

\bibitem{Frankldisjointpairsnonuniform}
P.~Frankl, \emph{On the minimum number of disjoint pairs in a family of finite
  sets,}, J. Combinatorial Theory Series A \textbf{22} (1977), 249--251.

\bibitem{FranklFuredinonuniform}
P.~Frankl and Z.~F{\"u}redi, \emph{On hypergraphs without two edges
  intersecting in a given number of vertices}, J. Combinatorial Theory Series A
  \textbf{36} (1984), 230--236.

\bibitem{FranklFurediuniform}
\bysame, \emph{Forbidding just one intersection}, J. Combinatorial Theory
  Series A \textbf{39} (1985), 160--176.

\bibitem{FranklKatoKatonaTokushige}
P.~Frankl, M.~Kato, G.O.H. Katona, and N.~Tokushige, \emph{Two-colorings with
  many monochromatic cliques in both colors}, J. Combinatorial Theory Series B
  \textbf{103} (2016), 415--427.

\bibitem{FranklRodl}
P.~Frankl and V.~R{\"o}dl, \emph{Forbidden intersections}, Transactions of the
  American Mathematical Society \textbf{300} (1987), no.~1, 259--286.

\bibitem{FranklTokushige}
P.~Frankl and N.~Tokushige, \emph{Invitation to intersection problems for
  finite sets}, J. Combinatorial Theory Series A \textbf{144} (2016), 157--211.

\bibitem{HuangLinialNavesPeledSudakov}
H.~Huang, N.~Linial, H.~Naves, Y.~Peled, and B.~Sudakov, \emph{On the densities
  of cliques and independent sets in graphs}, Combinatorica \textbf{36} (2016),
  493--512.

\bibitem{C}
V.~Kaibel and S.~Weltge, \emph{A short proof that the extension complexity of
  the correlation polytope grows exponentially}, Discrete and Computational
  Geometry \textbf{53} (2015), 397--401.

\bibitem{Katona}
G.~O.~H. Katona, \emph{Intersection theorems for systems of finite sets}, Acta
  Math. Acad. Sci. Hungar. \textbf{15} (1964), 329--337.

\bibitem{crossIntersection}
P.~Keevash and B.~Sudakov, \emph{On a restricted cross-intersection problem},
  J. Combinatorial Theory Series A \textbf{113} (2006), 1536--1542.

\bibitem{Sgall}
J.~Sgall, \emph{Bounds on pairs of families with restricted intersections},
  Combinatorica \textbf{19} (1999), 555--566.

\end{thebibliography}
    \bibliographystyle{amsplain}

\end{document}